\newtheorem{theorem}{Theorem}[section]
\newtheorem{proposition}[theorem]{Proposition}
\theoremstyle{definition}
\newtheorem{definition}[theorem]{Definition}
\newtheorem{example}[theorem]{Example}
\newtheorem{remark}[theorem]{Remark}
\newtheorem{corollary}[theorem]{Corollary}
\numberwithin{equation}{section}
\newcommand{\leqnomode}{\tagsleft@true}
\newcommand{\reqnomode}{\tagsleft@false}
\DeclareMathOperator{\Int}{Int}
\DeclareMathOperator{\Ext}{Ext}
\DeclareMathOperator{\tr}{tr}
\DeclareMathOperator{\Eig}{Eig}
\DeclareMathOperator{\ppp}{p}
\newcommand{\CC}[1][]{
	\ensuremath{\mathcal{C}^#1}
}
\newcommand{\R}[1][]{
	\ensuremath{\mathbb{R}^#1}
}
\newcommand{\C}{
	\ensuremath{\mathbb{C}}
}
\newcommand{\Hol}{
	\ensuremath{\mathcal{O}}
}
\newcommand{\N}{
	\ensuremath{\mathbb{N}}
}
\renewcommand{\bf}{\bfseries}
\begin{document}

\reqnomode

\noindent                                             
\begin{picture}(150,36)                               
\put(5,20){\tiny{Submitted to}}                       
\put(5,7){\textbf{Topology Proceedings}}              
\put(0,0){\framebox(140,34)}                          
\put(2,2){\framebox(136,30)}                          
\end{picture}                                         

\vspace{0.5in}

\title[Local geometry of equilibria in holomorphic flows]
{Local geometry of equilibria and a Poincaré-Bendixson-type theorem for holomorphic flows}

\author{Nicolas Kainz}
\address{Institute of Numerical Mathematics, Ulm University, Germany}
\email{nicolas.kainz@uni-ulm.de}

\author{Dirk Lebiedz}
\address{Institute of Numerical Mathematics, Ulm University, Germany}
\email{dirk.lebiedz@uni-ulm.de}

\subjclass[2020]{Primary 30A99, 30C10, 30C15, 30D30; Secondary 32M25, 37F10, 37F75}

\keywords{Holomorphic dynamical system, local geometry of equilibria, definite directions, finite elliptic decomposition, elliptic sectors, Poincaré-Bendixson}

\thanks{\textit{Acknowledgement:} We thank the referee for helpful suggestions and corrections.}

\begin{abstract}
In this paper, we explore the local geometry of dynamical systems $\dot{x}=F(x)$ with real time parameterization, where $F$ is holomorphic on connected open subsets of $\mathbb{C}\stackrel{\sim}{=}\R{2}$. We describe the geometry of first-order equilibria. For equilibria of higher orders, we establish an equivalent condition for "definite directions", allowing us to reverse the implication in Theorem 2 of Chapter 2.10 in [{\it Differential equations and dynamical systems}, Lawrence Perko (1990)] under the additional condition of holomorphy. This enables the geometric construction of a finite elliptic decomposition. We derive a holomorphic Poincaré-Bendixson-type theorem, leading to the conclusion that bounded non-periodic orbits are always homoclinic or heteroclinic.
\end{abstract}

\maketitle

\section{\bf Introduction}
The qualitative topological description of the phase space of differential equations is an important area of research, cf. \cite{andronov1973qualitativetheory,dumortier2006qualitative,perko2013differential,pruss2010gewohnliche}, resulting in many significant insights into the local geometry of equilibria, as discussed in works like \cite{broughan2003holomorphic,broughan2003structure}. Several of these results, especially regarding Newton flows, cf. \cite{heitel2019newton,lebiedz2020hamiltonian,neuberger2014newtonzeta,schleich2018newtonxi}, are relevant for studying holomorphic and meromorphic flows of vectors fields derived from complex functions pertinent to number theory, such as the Riemann $\zeta$- and $\xi$-function, cf. \cite{broughan2005xi,broughan2004zeta}. Generally, the crucial backbone of global phase space of such functions consists of equilibria and separatrices delimiting the "basins of equilibria". Conjectures have been made to relate the specific location of equilibria and separatrices to asymptotic dynamics "at infinity", cf. \cite{schleich2018newtonxi}. Therefore, the topological and analytical characterization of separatrices and "definite directions" in phase space associated with local properties of equilibria is a highly important issue. While dealing with this topic, it turns out that holomorphic vector fields reveal a very fascinating local-global relation in the phase space linked to their locally conformal structure. This investigation serves as the main focus elaborated across a series of papers, with this one being the first in the series. In the following, we give a short chronological guideline to this first paper.

As already mentioned, this paper deals with the special case of holomorphic vector fields in real time parameterization, i.e. a \textit{holomorphic dynamical system} of the form
\begin{align}\label{eq:planarODE}
    \dot{x}=\frac{\mathrm{d}x}{\mathrm{d}t}=F(x),\quad x\in\Omega
\end{align}
with $t\in\R{}$ and $F\in\Hol(\Omega)$, where $\Omega\subset\C\stackrel{\sim}{=}\R{2}$ is open and connected. At first, we characterize the local geometry of simple equilibria, i.e. equilibria of order $m=1$. We categorize simple equilibria into three distinct cases: centers, nodes, and foci.

Subsequently, we describe the local structure of equilibria of order $m\ge2$ in detail. Specifically, we study the result in \cite[Theorem 2.5]{broughan2003holomorphic} using the theory of local sectors, cf. \cite[Chapter 1.5]{dumortier2006qualitative}. An equivalent condition for definite directions enables the existence of a finite elliptic decomposition. This not only allows for reversing the implication in \cite[2.10, Theorem 2]{perko2013differential} and \cite[\S20, Theorem 64]{andronov1973qualitativetheory}, respectively, but also ensures the correctness of the statement in \cite[Theorem 2.5]{broughan2003holomorphic}. For enhanced comprehension, the proof is supported by a geometric visualization. Additionally, we discuss the possibility of transferring this local structure to the global phase portrait by using two suitable examples.

Building upon these foundations, we formulate and prove a specific Poincaré-Bendixson-type result. More precisely, the holomorphy as an additional condition on the vector field permits a more detailed description of the limit sets of bounded orbits. As a Corollary, we conclude that bounded non-periodic orbits must be either homoclinic or heteroclinic. This crucial result sets the stage for a precise topological and geometrical description of "basins of equilibria" and "global elliptic sectors" in subsequent papers.

\section{\bf Common notations and technical preliminaries}
For $z\in\C{}\stackrel{\sim}{=}\R{2}$ we denote by $\Re(z)$ the real part, by $\Im(z)$ the imaginary part and by $\arg(z)$ the argument of $z$. If $\Omega\subset\R{n}$ is open and $f=(f_1,\ldots,f_m)\in\CC{1}(\Omega;\R{m})$, $n,m\in\N{}$, then we denote the Jacobian matrix of $f$ in $x_0\in\Omega$ by $\mathcal{J}_f(x_0)$. The set $\Hol{}(\Omega)$ is the set of holomorphic functions in $\Omega\subset\C{}$. A domain is defined as an open and connected set.

A trajectory or orbit through $x_0\in\Omega\subset\C{}$ corresponding to \eqref{eq:planarODE} is the maximum phase curve denoted by the set $\Gamma(x_0):=x(I)$, where $x$ is the unique solution of \eqref{eq:planarODE} through $x_0$ and $I=I(x_0)\subset\R{}$ the maximum interval of existence with respect to $x_0$. In this notation, we set $\Gamma_+(x_0):=x(I\cap[0,\infty))$ and $\Gamma_-(x_0):=x(I\cap(-\infty,0])$. Each orbit can be evaluated at a time $t\in I$ by the flow $\Phi(t,x_0):=x(t)$, with $x(0)=x_0$. We have $\Gamma(x_0)=\Phi(I(x_0),x_0)$.

Moreover, we use the term of limit sets, cf. \cite[\S4]{andronov1973qualitativetheory} and \cite[Chapter 8.4]{pruss2010gewohnliche}. If $x_0\in\Omega\subset\C{}$ is an initial value, $\Gamma=\Gamma(x_0)\subset\C{}$ the orbit of \eqref{eq:planarODE} through $x_0$ and $I=I(x_0)$ the maximum interval of existence, then we define the positive (negative) limit set as
\begin{align*}
	w_{+(-)}(\Gamma):=\left\{v\in\C{}\,|\,\exists\,(t_k)_{k\in\N{}}\subset I:t_k\overset{k\to\infty}{\longrightarrow}(-)\infty,\,\Phi(t_k,x_0)\overset{k\to\infty}{\longrightarrow}v\right\}\text{.}
\end{align*}
This set does not depend on the initial value, i.e. $w_\pm(\Gamma(\tilde{x}_0))=w_\pm(\Gamma(x_0))$ for all $\tilde{x}_0\in\Gamma(x_0)$. Additionally, if $I(x_0)$ is bounded from above (below), then $w_{+(-)}(\Gamma(x_0))=\emptyset$. Both limit sets exist only when $I(x_0)=\R{}$.

Furthermore, by the Identity Theorem, we can always assume that the zeros of $F$ in \eqref{eq:planarODE} do not have an accumulation point, i.e. on bounded sets there are finitely many equilibria. Since the case $F\equiv 0$ in \eqref{eq:planarODE} is trivial, we do not consider it in this paper.

We also use the Jordan curve Theorem, cf. \cite[Theorem 63.4]{munkres2000topology}, \cite[Lemma 61.1]{munkres2000topology} and \cite[p. 185, Example 4]{munkres2000topology}, as well as the Theorem of Jordan-Schoenflies, cf. \cite[p. 169]{hatcher2005algebraic} and \cite[pp. 376-377]{munkres2000topology}, without citation throughout this paper. If $\Gamma\subset\C{}$ is a closed Jordan curve, we denote the two connecting components resulting from the Jordan curve Theorem by $\Int(\Gamma)$ (the bounded interior of $\Gamma$) and $\Ext(\Gamma)$ (the unbounded exterior of $\Gamma$). If the closed Jordan curve $\Gamma$ lies in a simply connected domain $\Omega\subset\C{}$, then $\Int(\Gamma)\subset\Omega$. The original proof by Jordan can be found in \cite[pp. 587-594]{jordan1893cours}.

\section{\bf Local geometry of simple equilibria}
In the following, we describe the geometric properties of the flow corresponding to the system \eqref{eq:planarODE} locally near simple equilibria, i.e. equilibria of order $m=1$.
\begin{definition}\label{def:GeometryOfEquilibria} \cite[Chapter 2.10]{perko2013differential}.
    Let $\Omega\subset\C$ be a domain and $F\in\Hol(\Omega)$. A point $a\in\Omega$ is an equilibrium of \eqref{eq:planarODE} if $F(a)=0$, i.e. $a\in F^{-1}(\{0\})$. In particular, an equilibrium is called
	\begin{itemize}
		\item[(i)] a center, if there exists $\delta>0$ such that for all $y\in\mathcal{B}_\delta(a)\setminus\{a\}\subset\Omega$ the orbit $\Gamma(y)$ is a closed curve with $a\in\Int(\Gamma(y))$.
		\item[(ii)] a (an) stable (unstable) focus, if there exists $\delta>0$ such that for all $y\in\mathcal{B}_\delta(a)\setminus\{a\}\subset\Omega$ the solution  through $y$ exists globally to the right (left) and satisfies $|\Phi(t,y)|\to|a|$ and $|\arg(\Phi(t,y)-a)|\to\infty$ for $t\to\infty$ ($t\to-\infty$), in particular, $w_+(\Gamma(y))=\{a\}$ ($w_-(\Gamma(y))=\{a\}$). In this case, we say that $\Gamma(y)$ is a spiral.
		\item[(iii)] a stable (unstable) node, if there exists $\delta>0$ such that for all $y\in\mathcal{B}_\delta(a)\setminus\{a\}\subset\Omega$ the solution through $y$ exists globally to the right (left) and satisfies $|\Phi(t,y)|\to|a|$ and $\arg(\Phi(t,y)-a)\to\theta_0$ for $t\to\infty$ ($t\to-\infty$) with a $\theta_0\in[0,2\pi)$, in particular, $w_+(\Gamma(y))=\{a\}$ ($w_-(\Gamma(y))=\{a\}$). In this case, we say that $\Gamma(y)$ tends to $a$ in the definite direction $\theta_0$, i.e. the "pinned tangent vector" of $\Gamma(y)$ tends to the ray originating in $a$ with angle $\theta_0$.
		\item[(iv)] a saddle, if there exist four trajectories $\Gamma_1,\ldots,\Gamma_4$ with $w_+(\Gamma_1)=w_+(\Gamma_2)=\{a\}$ and $w_-(\Gamma_3)=w_-(\Gamma_4)=\{a\}$ and $\delta>0$ such that for all $y\in\mathcal{B}_\delta(a)\setminus\{a\}\subset\Omega$ there exists a $\tau>0$ with $\Phi(t,y)\not\in\mathcal{B}_\delta(a)$ for all $|t|>\tau$.
	\end{itemize}
\end{definition}
\begin{theorem}\cite[Theorem 2.1]{broughan2003holomorphic}\label{thm:equilibria_characterization}
	Let $\Omega\subset\C$ be a domain and $F\in\Hol(\Omega)$. Let $a\in\Omega$ be an equilibrium of \eqref{eq:planarODE} with $F^\prime(a)=\alpha+\mathrm{i}\beta\not=0$. Then:
	\begin{itemize}
		\item[(i)] $\Eig(J_f(a))=\left\{F^\prime(a),\overline{F^\prime(a)}\right\}$.
		\item[(ii)] If $\alpha\not=0$ and $\beta=0$, $a$ is a node. If $\alpha<0$ ($\alpha>0$), the node is asymptotically stable\footnote{cf. \cite[2.9, Definition 1]{perko2013differential}.} (repelling and unstable).
		\item[(iii)] If $\alpha\not=0$ and $\beta\not=0$, $a$ is a focus. If $\alpha<0$ ($\alpha>0$), the focus is asymptotically stable (repelling and unstable).
		\item[(iv)] If $\alpha=0$ and $\beta\not=0$, $a$ is a center or a focus.
        \item[(v)] The equilibrium $a$ cannot be a saddle.
	\end{itemize}
\end{theorem}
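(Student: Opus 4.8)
The plan is to reduce everything to a normal-form computation at the equilibrium $a$. Since $F$ is holomorphic and $F(a)=0$, $F'(a)\neq 0$, we may write $F(z)=F'(a)(z-a)+O((z-a)^2)$, and after translating $a$ to the origin the linearization of the real vector field at $a$ is multiplication by the complex number $F'(a)=\alpha+\mathrm i\beta$, viewed as a real $2\times 2$ matrix $\begin{pmatrix}\alpha & -\beta\\ \beta & \alpha\end{pmatrix}$. Part (i) is then immediate: the eigenvalues of this matrix are exactly $\alpha\pm\mathrm i\beta=F'(a)$ and $\overline{F'(a)}$, and since the Jacobian $\mathcal J_f(a)$ of the real system equals this matrix (Cauchy--Riemann equations), we are done with (i).

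For parts (ii)--(iv) I would invoke the Hartman--Grobman theorem together with the standard classification of linear planar systems (as in \cite[Chapter 1.5 and 2.10]{perko2013differential}): whenever the linearization has no eigenvalue on the imaginary axis the local phase portrait is topologically (indeed, for foci and nodes, qualitatively) that of the linear part. If $\beta=0$, $\alpha\neq 0$, the matrix is $\alpha I$, a genuine star node — every ray through $a$ is invariant for the linearization, so each nearby orbit converges to $a$ with a well-defined limiting argument $\theta_0$, and the sign of $\alpha$ fixes stability; this gives (ii). If $\alpha\neq 0$ and $\beta\neq 0$, the eigenvalues are genuinely complex with nonzero real part, so the origin is a focus/spiral: in polar coordinates $r,\varphi$ one computes $\dot r=\alpha r+O(r^2)$ and $\dot\varphi=\beta+O(r)$, whence $|\Phi(t,y)-a|\to 0$ monotonically and $\arg(\Phi(t,y)-a)$ winds without bound, matching Definition \ref{def:GeometryOfEquilibria}(ii); the sign of $\alpha$ again gives stability. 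This yields (iii). If $\alpha=0$, $\beta\neq 0$, the linearization is a pure rotation (a linear center), so the real-part dynamics is governed by higher-order terms: one still has $\dot\varphi=\beta+O(r)\neq 0$ near $a$, so orbits circulate around $a$, and each nearby orbit is therefore either a closed curve encircling $a$ (center) or spirals in/out (focus) — there is no third option because the angular variable is strictly monotone; this is (iv).

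For part (v), the key point is that a saddle requires the linearization to have two real eigenvalues of opposite sign. But the eigenvalues here are $F'(a)$ and $\overline{F'(a)}$, which are either a genuinely complex-conjugate pair (if $\beta\neq 0$) or a real eigenvalue of multiplicity two, namely $\alpha$ with $\alpha=\overline\alpha$ (if $\beta=0$). In neither case can we have two real eigenvalues of opposite sign, so by Hartman--Grobman the local portrait cannot be that of a saddle; alternatively, in cases (ii)--(iv) we have already shown $a$ is a node, focus, or center, each of which is incompatible with the saddle behavior in Definition \ref{def:GeometryOfEquilibria}(iv) (in particular a node or focus has a full neighborhood of orbits converging to $a$, ruling out the escape condition). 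This proves (v).

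The main obstacle is part (iv): Hartman--Grobman gives no information when the linearization is a center, so one must argue directly that the phase portrait near a holomorphic center-type equilibrium is still of center-or-focus type. The cleanest route is the angular-monotonicity observation above — since $\dot\varphi=\beta+O(r)$ stays bounded away from $0$ on a small punctured disc, every orbit there crosses each ray through $a$ transversally and in a fixed rotational sense, so a local first-return (Poincaré) map on a ray segment is well-defined and monotone, and its dynamics forces each orbit to be either periodic (center) or strictly spiralling (focus), with no other local behavior possible. One should also note that $a$ is isolated as an equilibrium (by the Identity Theorem, as recalled in the preliminaries), which is what makes the punctured-disc analysis legitimate.
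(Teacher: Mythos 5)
Your treatment of (i), (ii), (iii) and (v) follows essentially the same route as the paper: the Cauchy--Riemann form of the Jacobian gives (i), and the rest reduces to the behaviour of an analytic perturbation of the linear system. One caveat on presentation: Hartman--Grobman only gives a topological conjugacy, which does \emph{not} distinguish a node from a focus (both are topologically equivalent sinks or sources), so it cannot by itself justify the claims of ``well-defined limiting argument'' in (ii) or the spiralling in (iii). Your polar-coordinate estimates $\dot r=\alpha r+O(r^2)$, $\dot\varphi=\beta+O(r)$ do the real work and are correct; the paper instead cites \cite[Chapter 2.10, Theorem 4]{perko2013differential}, which is exactly the quantitative node/focus theorem one wants here. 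Similarly for (v), Hartman--Grobman does not apply when $\alpha=0$ (the linearization is nonhyperbolic), but your alternative argument --- that the cases (ii)--(iv) exhaust $F'(a)\neq0$ and none produces a saddle --- is precisely the paper's.

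There is a genuine gap in (iv). Angular monotonicity ($\dot\varphi=\beta+O(r)$ bounded away from $0$) gives a well-defined monotone Poincar\'e first-return map $P$ on a ray transversal, and from monotonicity you can conclude that \emph{each individual} orbit is either closed or spirals. But the dichotomy you assert --- $a$ is a center or a focus --- is a \emph{uniform} statement: all nearby orbits closed, or all nearby orbits spiralling. Monotonicity of $P$ does not exclude a mix of fixed points and non-fixed points of $P$ accumulating at the origin, i.e.\ a center-focus, which is a genuine possibility for merely $\CC{\infty}$ planar vector fields. To close the gap you must use analyticity: since $F$ is holomorphic it is real-analytic, the return map $P$ is real-analytic, and a real-analytic map with fixed points accumulating at $0$ is the identity near $0$. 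This is exactly the content of the exclusion-of-center-focus corollary the paper cites \cite[Chapter 2.10, Corollary to Theorem 5 and p.\ 206, Dulac Theorem]{perko2013differential}, and it is the one step in the theorem that genuinely needs more than a linearization argument.
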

\begin{proof}
    By the Cauchy-Riemann equations, we have
    \begin{align*}
		J:=\mathcal{J}_F(a)=\left(\begin{matrix}
			\alpha&-\beta\\
			\beta&\alpha
		\end{matrix}\right)
    \end{align*}
    and the characteristic polynomial $\ppp_J(\lambda)=(\alpha-\lambda)^2+\beta^2$. As complex conjugated eigenvalues we get $\lambda_{\pm}:=\alpha\pm\mathrm{i}\beta$. Additionally, we have the trace $\tr(J)=2\alpha$, the determinant $\det(J)=\alpha^2+\beta^2$ and $\tr(J)^2-4\det(J)=4\alpha^2-4(\alpha^2+\beta^2)=-4\beta^2\le 0$. Consider the linearized system $x^\prime=Jx$ on $\R{2}$. For this system we can use the results in \cite[Chapter 1.5]{perko2013differential}. If $\alpha\not=0$ and $\beta=0$, there is only one real eigenvalue and $a$ is a node for the linearized system. If $\alpha\not=0$ and $\beta\not=0$, $a$ is a focus. If $\alpha=0$ and $\beta\not=0$, $a$ is a center.
    
    By using \cite[Chapter 2.10, Theorem 4]{perko2013differential}, we conclude the assertions (ii) and (iii). The claims regarding stability follow from the principle of linearized stability, \cite[Satz 5.4.1]{pruss2010gewohnliche}. The vector field $F$ is real analytic in $a$ and thus (iv) follows from \cite[Chapter 2.10, Corollary to Theorem 5]{perko2013differential}, which excludes the existence of a center-focus, cf. \cite[p. 206, Dulac Theorem]{perko2013differential}. Since all cases for $\alpha$ and $\beta$ have already been covered, a cannot be a saddle, i.e. (v) holds.
\end{proof}

\section{\bf Definite directions and elliptic decomposition}
In this section, we characterize which directions in an equilibrium of order $m\ge2$ are definite and whether a finite elliptic decomposition occurs. We proof that the implication in \cite[2.10, Theorem 2]{perko2013differential} and \cite[\S20, Theorem 64]{andronov1973qualitativetheory}, respectively, is even an equivalence under the stronger condition of holomorphy.
\begin{definition}\label{def:sector} \cite[Chapter 1.5]{dumortier2006qualitative}, \cite[Definition 4.14 b)]{masterthesiskainz}.
    Let $\Omega\subset\C{}$ be a domain, $F\in\Hol{}(\Omega)$, $F\not\equiv 0$, $a\in\Omega$ an equilibrium of \eqref{eq:planarODE} and $\mu:[0,1]\to\Omega$ a closed piecewise continuously differentiable Jordan curve. Set $\Gamma:=\mu([0,1])$. Assume that $\overline{\Int(\Gamma)}\subset\Omega$ and $\overline{\Int(\Gamma)}\cap F^{-1}(\{0\})=\{a\}$. Let $\nu:\Gamma\to S^1$ be the outer unit normal of $\Gamma$ defined at all points where $\mu$ is differentiable.
    \begin{itemize}
        \item[a)] For $p,q\in\Gamma$ we denote by $\Gamma(p,q)$ the closed (i.e. including $p$ and $q$) curve section of $\Gamma$ from $p$ to $q$ in counterclockwise direction. If $p=q$, we set $\Gamma(p,q)=\{p\}=\{q\}$.
        \item[b)] A sector $S\subset\overline{\Int(\Gamma)}$ of \eqref{eq:planarODE} in $a$ with respect to $\Gamma$ is a compact set such that there exist two so-called characteristic orbits $\Gamma_1,\Gamma_2$ of \eqref{eq:planarODE} and two intersection points $p_1,p_2\in\Gamma$ with the following properties:
        \begin{itemize}
            \item[(i)] $\Gamma_1\not=\Gamma_2$ and $p_1\not=p_2$.
            \item[(ii)] $\mu$ is differentiable at $p_1$ and $p_2$. For $i\in\{1,2\}$, $\Gamma_i\cap\Gamma=\{p_i\}$ and $\langle F(p_i),\mu(p_i)\rangle\not=0$, i.e. the intersection is non-tangential.
            \item[(iii)] The boundary of $S$ is given by
            \begin{align*}
                \partial S=\big(\left(\Gamma_1\cup\Gamma_2\right)\cap\Int(\Gamma)\big)\cup\Gamma(p_1,p_2)\cup\{a\}\text{.}
            \end{align*}
        \end{itemize}
        \item[c)] Let $S$ be a sector of \eqref{eq:planarODE} in $a$ with respect to $\Gamma$ with characteristic orbits $\Gamma_1,\Gamma_2$, intersection points $p_1,p_2\in\Gamma$ and the curve piece $\Lambda:=\Gamma(p_1,p_2)$. Then $S$ is called an elliptic sector with clockwise (counterclockwise) direction if there exist two points $E_1,E_2\in\Lambda\setminus\{p_1,p_2\}$ such that $\Lambda$ is differentiable everywhere except for the two points $E_1$ and $E_2$ and such that the following properties are satisfied:
        \begin{itemize}
            \item[(i)] $\Gamma(E_1)=\Gamma(E_2)\subset\overline{\Int(\Gamma)}$ with $\Gamma(E_1)\cap\Gamma=\Gamma(E_1,E_2)$.
            \item[(ii)] $w_+(\Gamma(E_1))=w_-(\Gamma(E_1))=\{a\}$, i.e. $\Gamma(E_1)$ is homoclinic.
            \item[(iii)] Set $\Lambda_1:=\Gamma(p_1,E_1)\setminus\{E_1\}$ and $\Lambda_2:=\Gamma(E_2,p_2)\setminus\{E_2\}$. For all $y_1\in\Lambda_1$ and $y_2\in\Lambda_2$:
            \begin{itemize}[leftmargin=6mm]
                \item[$\bullet$] $\langle F(y_1),\nu(y_1)\rangle<(>)\;0$.
                \item[$\bullet$] $\langle F(y_2),\nu(y_2)\rangle>(<)\;0$.
                \item[$\bullet$] $\Gamma_{+(-)}(y_1)\cup\Gamma_{-(+)}(y_2)\subset\overline{\Int(\Gamma)}$.
                \item[$\bullet$] $w_{+(-)}(\Gamma(y_1))=w_{-(+)}(\Gamma(y_2))=\{a\}$.
            \end{itemize}
            \item[(iv)] For all $z\in\tilde{S}:=\Int(\Gamma(E_1)\cup\{a\})$, $\Gamma(z)\subset\overline{\Int(\Gamma)}$ and $w_+(\Gamma(z))=w_-(\Gamma(z))=\{a\}$, where
            \begin{align*}
                \hspace{1cm}S\setminus\tilde{S}=\{a\}\cup\Gamma(E_1)\cup\,\bigcup_{\mathclap{y_1\in\Lambda_1}}\,\Gamma_{+(-)}(y_1)\cup\,\bigcup_{\mathclap{y_2\in\Lambda_2}}\,\Gamma_{-(+)}(y_2)\text{.}
            \end{align*}
        \end{itemize}
        \item[d)] The system \eqref{eq:planarODE} has a finite elliptic decomposition (FED) of order $d\in\N{}\setminus\{1\}$ in $a$, if the equilibrium is not a center, node or focus and if there are $d$ characteristic orbits $\Gamma_1,\ldots,\Gamma_d$ with corresponding points $p_1\ldots,p_d\in\Gamma$ such that the following properties are satisfied for all $i\in\{1,\ldots,d\}$:
		\begin{itemize}
		  \item[(i)] The set $\kappa:=\big(\left(\Gamma_i\cup\Gamma_{i+1}\right)\cap\Int(\Gamma)\big)\cup\Gamma(p_i,p_{i+1})\cup\{a\}$ is a closed piecewise continuously differentiable Jordan curve.
			\item[(ii)] The set $S:=\overline{\Int(\kappa)}$ is an elliptic sector of \eqref{eq:planarODE} in $a$ with respect to $\Gamma$ whose characteristic orbits are $\Gamma_i,\Gamma_{i+1}$ and whose intersection points are $p_i,p_{i+1}\in\Gamma$.
			\item[(iii)] The characteristic orbits with the corresponding points are ordered cyclicly and counterclockwise with respect to $a$. Using the notation in (ii), we set $\Gamma_{d+1}:=\Gamma_1$ and $p_{d+1}:=p_1$.
		\end{itemize}
    \end{itemize}
\end{definition}
\begin{remark}
    Let $\Gamma$ be a homoclinic orbit in an equilibrium $a$, i.e. $w_+(\Gamma)=w_-(\Gamma)=\{a\}$. A parameterization with compact time interval for the closed Jordan curve $\Gamma\cup\{a\}$, as in Definition \ref{def:sector} c) (iv), can be constructed in the following manner:
    
    Fix $z\in\Gamma$ and define the map $\gamma:[-1,1]\to\C{}$ by
    \begin{align*}
        \gamma(t):=\begin{cases}
            \Phi\left(\frac{t}{(1-t)(1+t)},z\right)&\text{if }t\in(-1,1)\\
            a&\text{if }t\in\{-1,1\}
        \end{cases}\text{.}
    \end{align*}
    We have $\gamma(0)=z$. The map $t\mapsto\frac{t}{(1-t)(1+t)}$ is strictly monotonously increasing on $(-1,1)$ and converges to $\pm\infty$ for $t\to\pm 1$. Moreover, the existential quantifier in the definition of limit sets can be replaced by an universal quantifier, if the limit set consists of only one element. Thus, by applying \cite[Theorem 9]{andronov1973qualitativetheory}, we conclude
    \begin{align*}
        \lim_{t\to -1^+}\gamma(t)=\lim_{t\to 1^-}\gamma(t)=a
    \end{align*}
    as well as the continuity of $\gamma$.
\end{remark}

In the proof of \cite[Theorem 2.5]{broughan2003holomorphic} the author uses the equation
\begin{align}\label{eq:condition_H}
    \cos(\theta_0)Q_m(\cos(\theta_0),\sin(\theta_0))-\sin(\theta_0)P_m(\cos(\theta_0),\sin(\theta_0))=0\qquad
\end{align}
in \cite[2.10, Theorem 2]{perko2013differential} and \cite[\S20, Theorem 64]{andronov1973qualitativetheory}, respectively, not only as a necessary, but also as a sufficient condition for definite directions without a detailed justification. The following proposition guarantees the validity of \cite[Theorem 2.5]{broughan2003holomorphic}. Additionally, it ensures that the implications in \cite[2.10, Theorem 2]{perko2013differential} and \cite[\S20, Theorem 64]{andronov1973qualitativetheory} can be reversed as well, if the vector field is holomorphic.

\begin{proposition}[Equivalent condition for definite directions] \label{prop:definiteDirections2}
    Let $\Omega\subset\C{}$ be a domain, $a\in\Omega$, $F=F_1+\mathrm{i}F_2\in\Hol{}(\Omega)$, $F\not\equiv 0$ and $F(a)=0$. Assume that the order of $a$ is $m\in\N{}\setminus\{1\}$. Let $F_1^{[k]}$ and $F_2^{[k]}$ be the sum of all terms of the Taylor series at $a$ of $F_1$ and $F_2$ with degree $k\in\mathbb{N}$. Define the function $H(x_1,x_2):=x_1F_2^{[m]}(x_1,x_2)-x_2F_1^{[m]}(x_1,x_2)$, $(x_1,x_2)\in\R{2}$, and the set
    \begin{align*}
		\mathcal{E}(F,m):=\left\{\frac{\ell\pi-\arg(F^{(m)}(a))}{m-1}\mod 2\pi:\ell\in\mathbb{Z}\right\}\subset[0,2\pi)\text{.}
    \end{align*}
    Then:
    \begin{itemize}
        \item[(i)] $\theta_0\in[0,2\pi)$ fulfills the equation $H(\cos(\theta_0),\sin(\theta_0))=0$ if and only if $\theta_0\in\mathcal{E}(F,m)$. It holds $|\mathcal{E}(F,m)|=2m-2$.
        \item[(ii)] Every orbit tending to $a$ does so in a definite direction $\theta_0\in\mathcal{E}(F,m)$.
        \item[(iii)] For all $\theta_0\in\mathcal{E}(F,m)$ there exist $r,\delta>0$ such that for every $x_0\in A(r,\delta):=\{x\in\R{2}:|x-a|<r,|\arg(x-a)-\theta_0|<\delta\}\subset\Omega$ the orbit though $x_0$ tends to $a$ in the definite direction $\theta_0$. These orbits tend to $a$ for $t\to\infty$ ($t\to-\infty$) if and only if $\lambda(\theta_0):=\cos(\arg(F^{(m)}(a))+\theta_0(m-1))<(>)\;0$.
        \item[(iv)] For all $\theta_0\in\mathcal{E}(F,m)$ with $\lambda(\theta_0)<(>)\;0$ there exists $\hat{r}\in(0,r)$ and $\hat{\delta}\in(0,\delta)$ such that $\Gamma_{+(-)}(x_0)\in A(r,\delta)$ for all $x_0\in A(\hat{r},\hat{\delta})$.
    \end{itemize}
\end{proposition}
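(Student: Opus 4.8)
The plan is to translate $a$ to the origin, pass to polar coordinates, and desingularise the system by a quasi-homogeneous time change; then (i) becomes a trigonometric identity and (ii)--(iv) are read off from the local phase portrait at the equilibria of the desingularised planar system.

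\emph{Setup.} After translating so that $a=0$, the fact that $F$ is holomorphic with a zero of order $m$ at $a$ gives $F(a+w)=cw^{m}+w^{m+1}\tilde F(w)$ with $c=F^{(m)}(a)/m!\neq 0$ and $\tilde F$ holomorphic near $0$; in particular $\arg c=\arg(F^{(m)}(a))$. Writing $x-a=\rho e^{\mathrm{i}\theta}$ one computes $\dot\rho+\mathrm{i}\rho\dot\theta=F(a+\rho e^{\mathrm{i}\theta})e^{-\mathrm{i}\theta}=\rho^{m}\Psi(\theta,\rho)$, where $\Psi(\theta,\rho)=c\,e^{\mathrm{i}(m-1)\theta}+\rho\,e^{\mathrm{i}m\theta}\tilde F(\rho e^{\mathrm{i}\theta})$ is smooth (indeed real-analytic), $2\pi$-periodic in $\theta$, with $\Psi(\theta,0)=c\,e^{\mathrm{i}(m-1)\theta}$. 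Hence $\dot\rho=\rho^{m}\Re\Psi(\theta,\rho)$, $\dot\theta=\rho^{m-1}\Im\Psi(\theta,\rho)$, and $\Im\Psi(\theta,0)=|c|\sin((m-1)\theta+\arg c)=H(\cos\theta,\sin\theta)$, $\Re\Psi(\theta,0)=|c|\cos((m-1)\theta+\arg c)=|c|\lambda(\theta)$. This already gives (i): $H(\cos\theta_{0},\sin\theta_{0})=0$ iff $(m-1)\theta_{0}+\arg(F^{(m)}(a))\in\pi\mathbb{Z}$ iff $\theta_{0}\in\mathcal{E}(F,m)$, and $\theta\mapsto\sin((m-1)\theta+\arg c)$ has exactly $2m-2$ zeros in $[0,2\pi)$, so $|\mathcal{E}(F,m)|=2m-2$.

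\emph{Desingularisation.} Reparametrising time by $\mathrm{d}\tau=\rho^{m-1}\mathrm{d}t$ yields the regular system $\theta'=\Im\Psi(\theta,\rho)$, $\rho'=\rho\,\Re\Psi(\theta,\rho)$ on $\{\rho\geq 0\}$, with $\{\rho=0\}$ invariant. Its equilibria on $\{\rho=0\}$ are exactly the points $(\theta_{0},0)$, $\theta_{0}\in\mathcal{E}(F,m)$; there the Jacobian is upper triangular with eigenvalues $\partial_{\theta}\Im\Psi(\theta_{0},0)=|c|(m-1)\lambda(\theta_{0})$ and $\Re\Psi(\theta_{0},0)=|c|\lambda(\theta_{0})$, and since $\sin((m-1)\theta_{0}+\arg c)=0$ forces $\lambda(\theta_{0})=\pm 1\neq 0$, each $(\theta_{0},0)$ is a hyperbolic node: a sink if $\lambda(\theta_{0})<0$, a source if $\lambda(\theta_{0})>0$. (The accompanying picture of the half-cylinder would be inserted here.) For (ii), using the symmetry $F\leftrightarrow -F$ (which fixes $\mathcal{E}(F,m)$ and changes the sign of $\lambda$), I may assume the orbit converges to $a$ as $t\to+\infty$; then $\rho(t)\downarrow 0$, the orbit never meets $a$, and $|\dot\rho|\leq C\rho^{m}$ forces $\rho(t)\gtrsim t^{-1/(m-1)}$, so $\tau$ exhausts $[\tau_{0},\infty)$ and $\rho(\tau)\to 0$. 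Thus $\theta'=\Im\Psi(\theta,0)+O(\rho(\tau))$ is an asymptotically autonomous scalar equation whose limit field $\Im\Psi(\cdot,0)$ has only simple, isolated zeros; a trapping argument then shows $\theta(\tau)$ is eventually monotone and tends to one of these zeros, i.e. the orbit tends to $a$ in a definite direction lying in $\mathcal{E}(F,m)$. For (iii), near $\theta_{0}$ with $\lambda(\theta_{0})<0$ the node is a sink, so its basin in $\{\rho\geq 0\}$ contains a set $\{0<\rho<r,\ |\theta-\theta_{0}|<\delta\}$, which is $A(r,\delta)$; every orbit through $A(r,\delta)$ converges to $(\theta_{0},0)$, i.e. tends to $a$ in direction $\theta_{0}$, and $\rho'=\rho\,\Re\Psi<0$ there makes this happen as $t\to+\infty$; the source case $\lambda(\theta_{0})>0$ is the time reversal, and the stated equivalence holds because the sign of $\dot\rho$ near the ray of direction $\theta_{0}$ forbids the other time direction. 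For (iv), a hyperbolic sink admits a neighbourhood basis of positively invariant neighbourhoods; choosing one squeezed inside $A(r,\delta)$ produces $\hat r\in(0,r)$, $\hat\delta\in(0,\delta)$ with $\Gamma_{+}(x_{0})\subset A(r,\delta)$ for all $x_{0}\in A(\hat r,\hat\delta)$, and the time-reversed statement covers $\lambda(\theta_{0})>0$.

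The main obstacle is the asymptotically autonomous step in (ii): one must rule out the runaway behaviour in which $\theta(\tau)$ runs monotonically through all of $\mathbb{R}$, as well as genuine oscillation, which amounts to showing that near each attracting zero of $\Im\Psi(\cdot,0)$ the perturbation $O(\rho(\tau))$ is eventually too small to let $\theta$ cross it the wrong way; this has to be combined with the bookkeeping that $\tau$ really runs to $\infty$. One also has to record the uniform smoothness of $\Psi$ up to $\{\rho=0\}$ -- the precise point where holomorphy enters, since $F(a+w)/w^{m}$ is holomorphic near $0$, forcing the leading homogeneous part to be $cw^{m}$. Everything else reduces to standard facts about hyperbolic nodes in the plane; alternatively, (ii)--(iv) follow from \cite[2.10, Theorem 1 and Theorem 2]{perko2013differential} and \cite[\S20, Theorem 64]{andronov1973qualitativetheory} once the formulas above exhibit every characteristic direction as non-degenerate, which is exactly the content of the proposition.
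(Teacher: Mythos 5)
Your strategy is essentially the one the paper uses: translate to the origin, pass to polar coordinates, desingularise by $\mathrm{d}\tau=\rho^{m-1}\mathrm{d}t$, compute $\Im\Psi(\theta,0)=|c|\sin((m-1)\theta+\arg c)$ and $\Re\Psi(\theta,0)=|c|\lambda(\theta)$ to get (i), and identify each $(\theta_0,0)$, $\theta_0\in\mathcal{E}(F,m)$, as a hyperbolic node (eigenvalues $|c|\lambda(\theta_0)$ and $|c|(m-1)\lambda(\theta_0)$) to read off (iii) and (iv). Your complex-form shortcut $\dot\rho+\mathrm{i}\rho\dot\theta=F(a+\rho e^{\mathrm{i}\theta})e^{-\mathrm{i}\theta}=\rho^{m}\Psi(\theta,\rho)$ is a cleaner way to arrive at the same system and the same trigonometric identity that the paper derives via the real-and-imaginary-part Taylor expansions.

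The place where the proposal does not close is (ii), and you flag it yourself: the asymptotically autonomous trapping argument has to rule out both genuine spiralling (monotone $\theta\to\pm\infty$) and oscillation, and that is not a one-line estimate. The paper sidesteps this entirely. Having shown $H\not\equiv 0$ (via the explicit value at $\Theta=\frac{\pi-2\beta}{2(m-1)}$), it invokes the Bendixson/Andronov dichotomy (Andronov \S 20, Theorem~64; Perko 2.10, Theorem~2): an orbit tending to $a$ either does so in a characteristic direction solving $H=0$ or is a spiral, and in the latter case \emph{all} orbits near $a$ spiral, so $a$ would be a focus. But Step~4 already produced whole sectors $A(r,\delta)$ of orbits tending to $a$ in definite directions, so $a$ cannot be a focus and the spiral branch is excluded. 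Your closing remark that ``(ii)--(iv) follow from Perko/Andronov'' gestures at this, but those theorems give only the necessary condition $H(\cos\theta_0,\sin\theta_0)=0$ plus the spiral alternative; the extra observation that the already-constructed definite-direction orbits forbid the focus case is exactly what is missing and is what makes (ii) a short argument. If you replace the trapping sketch with that observation, your proof matches the paper's.
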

\newpage
\begin{proof}
    By considering the shifted vector field $\hat{F}(x):=F(x+a)$, we can assume w.l.o.g. $a=0$. The proof is divided into several steps.\\[2.2mm]
    \textbf{Step 1: Transformation into polar coordinates} \cite[\S20, 1.]{andronov1973qualitativetheory}
    
    Calculating the derivatives $\frac{\mathrm{d}|x|}{\mathrm{d}t}$ and $\frac{\mathrm{d}\arg(x)}{\mathrm{d}t}$ leads to the system
	\begin{equation}\label{eq:planarODE_polcoord1}
		  \begin{split}
		      \rho^\prime(t)&=F_1(\rho\cos(\theta),\rho\sin(\theta))\cos(\theta)+ F_2(\rho\cos(\theta),\rho\sin(\theta))\sin(\theta)\\
                \theta^\prime(t)&=\frac{F_2(\rho\cos(\theta),\rho\sin(\theta))\cos(\theta)-F_1(\rho\cos(\theta),\rho\sin(\theta))\sin(\theta)}{\rho}
		  \end{split}
	\end{equation}
    on the simply connected domain $\Omega_1:=(0,\rho^\star)\times\R{}$ with a small $\rho^\star>0$. The relation between the orbits of \eqref{eq:planarODE} and \eqref{eq:planarODE_polcoord1} is determined in \cite[\S8.3]{andronov1973qualitativetheory}. For $j\in\{1,2\}$, $k\in\mathbb{N}$ and $\theta\in\R{}$ set $\tilde{F}_j^{[k]}(\theta):=F_j^{[k]}(\cos(\theta),\sin(\theta))$. With this notation we have $F_j^{[k]}(\rho\cos(\theta),\rho\sin(\theta))=\rho^k\tilde{F}_j^{[k]}(\theta)$. By applying the time transformation $\mathrm{d}\tau=\rho^{m-1}\mathrm{d}t$ and the real analyticity of $F_1$ and $F_2$ in $a$, there exist two analytic functions $\Psi_1$ and $\Psi_2$ such that the orbits of \eqref{eq:planarODE_polcoord1} coincide with those of the system
    \begin{align}\label{eq:planarODE_polcoord2}
		\begin{split}
			\rho^\prime(\tau)&=\rho\tilde{F}_1^{[m]}(\theta)\cos(\theta)+\rho \tilde{F}_2^{[m]}(\theta)\sin(\theta)+\rho^2\Psi_1(\rho,\theta)\\
			\theta^\prime(\tau)&=\tilde{F}_2^{[m]}(\theta)\cos(\theta)-\tilde{F}_1^{[m]}(\theta)\sin(\theta)+\rho\Psi_2(\rho,\theta)
		\end{split}
    \end{align}
    on the domain $\Omega_1$, cf. \cite[\S1, 7., Lemma 8]{andronov1973qualitativetheory}. This system can also be considered on $\Omega_2:=(-\rho^\star,\rho^\star)\times\R{}$. But only on $\Omega_1$ we can relate the orbits of \eqref{eq:planarODE} with those of \eqref{eq:planarODE_polcoord2}, cf. \cite[\S8.3]{andronov1973qualitativetheory}.\\[2.2mm]
    \textbf{Step 2: Suitable representation of the Taylor expansions}
    
    Set $c_k:=\frac{1}{k!}F^{(k)}(a)$ for all $k\in\N{}$. We have $c_m\not=0$. The complex Taylor expansion of $F$ in $a$ is
    \begin{align*}
		F(z)=\sum\limits_{k=1}^\infty c_kz^k\quad\forall\,z\in\mathcal{B}_{r_0}(a)\subset\Omega
    \end{align*}
    with a sufficiently small radius of convergence $r_0>0$. In particular, reduce $r_0$ such that $\mathcal{B}_{r_0}(a)\cap F^{-1}(\{0\})=\{a\}$. It follows
    \begin{align*}
		F_1(z)=\Re(F(z))=\Re\left(\sum\limits_{k=1}^\infty c_kz^k\right)=\sum\limits_{k=1}^\infty \Re\left(c_kz^k\right)\quad\forall\,z\in\mathcal{B}_{r_0}(a)
    \end{align*}
    and analogous with $F_2=\Im(F)$. By using the Binomial theorem, it is easy to see that $F_1^{[k]}(x,y)=\Re\left(c_k(x+\mathrm{i}y)^k\right)$, i.e. this is already exactly the sum of all addends with degree $k\in\N{}$ in the real Taylor expansion of $F_1$. Analogous we have $F_2^{[k]}(x,y)=\Im\left(c_k(x+\mathrm{i}y)^k\right)$. We conclude for $k\in\N$ and $\theta\in\R{}$ the formulas
    \begin{align*}
        \tilde{F}_1^{[k]}(\theta)=|c_k|\cos(\arg(c_k)+k\theta),\quad\tilde{F}_2^{[k]}(\theta)=|c_k|\sin(\arg(c_k)+k\theta)\text{.}
    \end{align*}
    Define $\beta:=\arg(c_m)=\arg(F^{(m)}(a))$ and $\tilde{H}(\theta):=H(\cos(\theta),\sin(\theta))$ with $\theta\in\R{}$.\\[2.2mm]
    \textbf{Step 3: Calculation of the zeros of $\tilde{H}$}
    
    By using a trigonometric addition formula, we get the concise representation\footnote{This important trick enables a much simpler representation of the condition \eqref{eq:condition_H}, if the vector field is holomorphic.}
    \begin{align*}
        \tilde{H}(\theta)=\cos(\theta)\tilde{F}_2^{[m]}(\theta)-\sin(\theta)\tilde{F}_1^{[m]}(\theta)=|c_m|\sin(\beta+m\theta-\theta)\text{.}
    \end{align*}
    Since $|c_m|\not=0$, we conclude that $\tilde{H}(\theta)=0$ if and only if there exists $\ell\in\mathbb{Z}$ such that $\beta+m\theta-\theta=\ell\pi$, i.e. $\theta=\frac{\ell\pi-\beta}{m-1}$. By using the definition of $\mathcal{E}(F,m)$, it follows $\tilde{H}^{-1}(\{0\})=\mathcal{E}(F,m)$. Moreover, for all $\ell\in\mathbb{Z}$ we have
    \begin{align*}
		\frac{(\ell+2m-2)\pi-\beta}{m-1}=\frac{\ell\pi-\beta}{m-1}+\frac{(2m-2)\pi}{m-1}=\frac{\ell\pi-\beta}{m-1}+2\pi\text{.}
    \end{align*}
    Hence $\mathcal{E}(F,m)$ has exactly $2m-2$ elements and (i) holds.\\[2.2mm]
    \textbf{Step 4: Existence of $r,\hat{r}$ and $\delta,\hat{\delta}$}
    
    Let $\theta_0\in\mathcal{E}(F,m)$, i.e. $\theta_0$ satisfies $\tilde{H}(\theta_0)=0$. Consider the $\CC{\infty}$-system \eqref{eq:planarODE_polcoord2} on $\Omega_2$. The point $a_1:=(0,\theta_0)$ is an equilibrium of \eqref{eq:planarODE_polcoord2}. Denote by $G:\Omega_2\to\R{2}$ the right-hand side of \eqref{eq:planarODE_polcoord2}. We calculate the linearization
    \begin{align*}
		\mathcal{J}_{G}(a_1)=\left(\begin{matrix}
			\tilde{F}_1^{[m]}(\theta_0)\cos(\theta_0)+\tilde{F}_2^{[m]}(\theta_0)\sin(\theta_0)&0\\
			\Psi_2(0,\theta_0)&\tilde{H}^\prime(\theta_0)
		\end{matrix}\right)
    \end{align*}
    and the derivative
    \begin{align*}
        \tilde{H}^\prime(\theta_0)=|c_m|(m-1)\cos(\beta+m\theta_0-\theta_0)\text{.}
    \end{align*}
    Additionally, by using another trigonometric addition formula and step 2, we calculate
    \begin{align*}
        \lambda_1:=\tilde{F}_1^{[m]}(\theta_0)\cos(\theta_0)+\tilde{F}_2^{[m]}(\theta_0)\sin(\theta_0)=|c_m|\cos(\beta+m\theta_0-\theta_0)\text{.}
    \end{align*}
    Hence we find the two real eigenvalues $\lambda_1$ and $\lambda_2:=(m-1)\lambda_1$. We get $\lambda(\theta_0)=\frac{\lambda_1}{|c_m|}$. Since $\tilde{H}(\theta_0)=0$ and  sine and cosine do not have common zeros, we have $\lambda_1\not=0$. Hence, since $m-1>0$ and $|c_m|\not=0$, the linearization $\mathcal{J}_{G}(a_1)$ has two real non-zero eigenvalues with the same sign. Thus, $a_1$ is a stable or unstable node of \eqref{eq:planarODE_polcoord2}, cf. \cite[2.10, Theorem 4]{perko2013differential}. More precisely, $a_1$ is stable as well as attractive if and only if $\lambda_1<0$ and unstable as well as repelling if and only if $\lambda_1>0$, cf. \cite[Chapter 1.5]{perko2013differential}. Assume w.l.o.g $\lambda_1<0$, i.e. $a_1$ is asymptotically stable.
    
    We find $r\in\left(0,\min\{r_0,\rho^\star\}\right)$ and $\delta>0$ such that for all $\xi\in(-r,r)\times(\theta_0-\delta,\theta_0+\delta)\cap\Omega_1$ we have $|\Phi(\tau,\xi)|\to\theta_0$ for $\tau\to\infty$. Since the orbit through a point in $\{0\}\times\R{}$ stays on $\{0\}\times\R{}$, the set $\Omega_1$ is invariant\footnote{cf. \cite[2.5, Definition 2]{perko2013differential}.}. So for $x_0\in A(r,\delta)\subset\Omega$ and the orbit $\Gamma$ through $x_0$ of \eqref{eq:planarODE} we find $\xi\in(-r,r)\times(\theta_0-\delta,\theta_0+\delta)\cap\Omega_1$ and the orbit $\Gamma_1$ through $\xi$ of \eqref{eq:planarODE_polcoord2} which corresponds to $\Gamma$ (cf. Step 1). We get $\Gamma_+(\xi)\subset\Omega_1$ and $w_+(\Gamma_1)=\{a_1\}$. Hence the corresponding orbit $\Gamma$ tends to $a$ in the definite direction $\theta_0$. This relation also implies that $\Gamma$ tends to $a$ for $t\to+(-)\infty$ if and only if $\lambda(\theta_0)<(>)\;0$. This shows (iii).
    
    With the same argumentation, since $a_1$ is also attracting, there exists $\hat{r}\in(0,r)$ and $\hat{\delta}\in(0,\delta)$ such that $\Gamma_+(x_0)\in A(r,\delta)$ for all $x_0\in A(\hat{r},\hat{\delta})$. Note that we can always find a rectangle lying in an arbitrarily small circle and a circle lying in an arbitrarily small rectangle. This proves (iv).\\[2.2mm]
    \textbf{Step 5: Nonexistence of further definite directions}
    
    We have already shown that there are at least $2m-2>0$ definite directions, all given by $\mathcal{E}(F,m)$. It remains to show that we have exactly these $2m-2$ definite directions. Let $\Gamma$ be an arbitrary orbit tending to $a$. We have $m-1\ge1$, i.e. $m-1\not=0$. By choosing $\Theta:=\frac{\pi-2\beta}{2(m-1)}$ and $(\hat{x},\hat{y}):=(\cos(\Theta),\sin(\Theta))$, we have
    \begin{align*}
        H(\hat{x},\hat{y})=\tilde{H}(\Theta)=|c_m|\sin(\beta+(m-1)\Theta)=|c_m|\not=0\text{,}
    \end{align*}
    i.e. $H\not\equiv 0$ and we can apply \cite[\S20, Theorem 64]{andronov1973qualitativetheory}. If $\Gamma$ was a spiral, $a$ would be a focus. But we have already found orbits tending to $a$ in certain definite directions (cf. Step 4). Hence, $\Gamma$ tends to $a$ in a definite direction $\tilde{\theta}_0$ and satisfies $\tilde{H}(\tilde{\theta}_0)=0$. Thus, by Step 3, $\tilde{\theta}_0\in\mathcal{E}(F,m)$ and there are no more definite directions than these in $\mathcal{E}(F,m)$. We summarize that also (ii) holds.
\end{proof}
\begin{theorem}[Existence of elliptic decomposition]\label{thm:elliptic_decomposition}
    Let $\Omega\subset\C{}$ be a domain, $F=F_1+\mathrm{i}F_2\in\Hol{}(\Omega)$, $F\not\equiv 0$ and $a\in\Omega$ an equilibrium of \eqref{eq:planarODE} of order $m\in\N{}\setminus\{1\}$. Then $a$ is not a center, node or focus. Furthermore, the system \eqref{eq:planarODE} has a FED of order $d:=2m-2$ in $a$. Additionally, the characteristic orbits of each sector tend to $a$ in adjacent definite directions given by $\mathcal{E}(F,m)$, i.e. the sectors have pairwise empty intersection up to the characteristic orbits and $a$.
\end{theorem}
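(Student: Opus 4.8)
The plan is to read the theorem off Proposition~\ref{prop:definiteDirections2} together with the phase portrait of the blown-up system \eqref{eq:planarODE_polcoord2} near the circle $\{\rho=0\}$, and then to repackage everything in the language of Definition~\ref{def:sector}. As there, assume $a=0$ and write $\beta:=\arg(F^{(m)}(a))$ and $c_m:=\frac{1}{m!}F^{(m)}(a)$, so that $\mathcal E(F,m)$ consists of the $2m-2$ angles $\theta_\ell=\frac{\ell\pi-\beta}{m-1}\bmod 2\pi$ with $\lambda(\theta_\ell)=(-1)^\ell$. That $a$ is not a center, node or focus is cleanest via the index: the winding number of $F/|F|$ along a small circle about $a$ equals $m\ge 2$, whereas centers, nodes and foci all have index $1$. (It also follows from Proposition~\ref{prop:definiteDirections2} directly: by (iii) there are orbits running into $a$ along each of the $\ge 2$ directions of $\mathcal E(F,m)$, and since $\lambda(\theta_\ell)$ alternates in sign, some do so forward and some backward in time — impossible for a center's closed orbits, a focus's spirals (excluded by (ii)), or a node's unique definite direction.)

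For the decomposition, observe that on $\{\rho=0\}$ the system \eqref{eq:planarODE_polcoord2} has the equilibria $a_\ell=(0,\theta_\ell)$, each a node (Step~4 of the proof of Proposition~\ref{prop:definiteDirections2}) whose transverse eigenvalue has sign $(-1)^\ell$, so stable and unstable nodes alternate; each arc $\{0\}\times(\theta_\ell,\theta_{\ell+1})$, on which $\dot\theta=\tilde H(\theta)=|c_m|\sin(\beta+(m-1)\theta)$ keeps a constant sign, is a heteroclinic orbit from the adjacent source node to the adjacent sink node. Since $\dot\rho=\rho\,\lambda_1(\theta)+O(\rho^2)$ with $\lambda_1(\theta)=|c_m|\cos(\beta+(m-1)\theta)$ changing sign exactly once on $(\theta_\ell,\theta_{\ell+1})$, an orbit issued from the source node into $\rho>0$ first rises and then descends back onto the sink node, so for $\rho^\star$ small each strip $[0,\rho^\star)\times[\theta_\ell,\theta_{\ell+1}]$ is foliated, below a threshold height, by a one-parameter family of such ``arches'' $\mathcal H_\ell^{(s)}$, the one with maximal $\rho$-value $s$. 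Under the correspondence in Step~1 of that proof, every $\mathcal H_\ell^{(s)}$ with $s>0$ maps to a homoclinic orbit of \eqref{eq:planarODE} at $a$ that leaves and returns in the adjacent definite directions $\theta_\ell,\theta_{\ell+1}$, and the region these arches sweep out maps to a nested family of homoclinic loops together with $\{a\}$.

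Now fix a small $\rho_1>0$, let $H_\ell$ be the homoclinic orbit corresponding to $\mathcal H_\ell^{(\rho_1)}$, and let $E_1^{(\ell)},E_2^{(\ell)}$ cut off a large middle arc of $H_\ell$. For each $\ell$ pick, by Proposition~\ref{prop:definiteDirections2}(iii), a characteristic orbit $\Gamma_\ell$ running into $a$ in direction $\theta_\ell$ and lying between the legs of $H_{\ell-1}$ and $H_\ell$ that point towards $\theta_\ell$, and build the closed piecewise-$C^1$ Jordan curve $\Gamma$ by alternately following the middle arc of $H_\ell$ from $E_1^{(\ell)}$ to $E_2^{(\ell)}$ and a short smooth arc that crosses $\Gamma_{\ell+1}$ transversally at a single point $p_{\ell+1}$ and joins $E_2^{(\ell)}$ to $E_1^{(\ell+1)}$; for $\rho_1$ small $\Gamma$ lies in a disc about $a$ inside $\Omega$ with $F^{-1}(\{0\})\cap\overline{\Int(\Gamma)}=\{a\}$ and is non-differentiable precisely at the $E_j^{(\ell)}$. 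Then $\kappa_\ell:=((\Gamma_\ell\cup\Gamma_{\ell+1})\cap\Int(\Gamma))\cup\Gamma(p_\ell,p_{\ell+1})\cup\{a\}$ is a Jordan curve and $S_\ell:=\overline{\Int(\kappa_\ell)}$ is the ``pie slice'' between $\Gamma_\ell$ and $\Gamma_{\ell+1}$, and it remains to verify Definition~\ref{def:sector}~c) for $S_\ell$ with characteristic orbits $\Gamma_\ell,\Gamma_{\ell+1}$, distinguished homoclinic orbit $\Gamma(E_1^{(\ell)})=H_\ell$, and non-smooth points $E_1^{(\ell)},E_2^{(\ell)}$ of $\Lambda=\Gamma(p_\ell,p_{\ell+1})$. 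Items (i)--(ii) are built into the construction; for (iii), the connecting-arc pieces $\Lambda_1^{(\ell)},\Lambda_2^{(\ell)}$ sit near $\theta_\ell$, resp.\ $\theta_{\ell+1}$, at radius $\approx\rho_1$, where $\langle F,\nu\rangle$ has the sign of $\lambda(\theta_\ell)$, resp.\ $\lambda(\theta_{\ell+1})$ — opposite signs, which fixes the clockwise/counterclockwise case — and the containment and limit-set conditions come from Proposition~\ref{prop:definiteDirections2}(iii)--(iv) once $\rho_1$ and the angular widths are small enough that the relevant cones $A(r,\delta)$ lie in $\overline{\Int(\Gamma)}$; item (iv) is exactly the arch picture of the previous paragraph, with $\tilde S$ corresponding to the part of the strip below $\mathcal H_\ell^{(\rho_1)}$ (swept by the smaller arches and the bottom edge, hence by homoclinic loops and $\{a\}$) and $S_\ell\setminus\tilde S$ to the part above it, swept by the forward, resp.\ backward, orbit pieces issuing from $\Lambda_1^{(\ell)}$, resp.\ $\Lambda_2^{(\ell)}$. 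Since the $\theta_\ell$ increase cyclically, Definition~\ref{def:sector}~d) holds with $d=2m-2$, and the $2m-2$ pie slices tile $\overline{\Int(\Gamma)}$ and meet only along the shared characteristic orbits — which run into $a$ in adjacent directions of $\mathcal E(F,m)$ — and at $a$.

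The main obstacle is the arch picture of the second paragraph, reused for item~(iv): showing that for $\rho^\star$ small the flow of \eqref{eq:planarODE_polcoord2} really traps every orbit leaving a source node $a_\ell$ within its strip until it reaches the adjacent sink node $a_{\ell+1}$ (a trapping-region argument exploiting the single sign change of $\lambda_1$ on $(\theta_\ell,\theta_{\ell+1})$ and the local node structure, with the degenerate equal-eigenvalue case $m=2$ needing separate care), and then aligning this foliation with the pie slices so that the set identity in Definition~\ref{def:sector}~c)(iv) holds verbatim. Checking that $\Gamma$ is a genuine piecewise-$C^1$ Jordan curve non-smooth exactly at the $E_j^{(\ell)}$, with its arcs meeting only where claimed, is technical but routine.
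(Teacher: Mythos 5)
Your plan is essentially the one carried out in the paper: pass to the blow-up \eqref{eq:planarODE_polcoord2}, identify the $2m-2$ node equilibria $b_i=(0,\theta_i)$ on $\{\rho=0\}$ with alternating stability, build a piecewise-$C^1$ Jordan curve out of pieces of homoclinic loops together with short transversal arcs near the nodes, and then verify Definition~\ref{def:sector} on each ``pie slice''. The exclusion of center/node/focus via the alternating signs of $\lambda(\theta_\ell)$ is exactly what the paper does, and your identification of the non-smooth points $E_j^{(\ell)}$ and the characteristic orbits $\Gamma_\ell$ matches the paper's $\hat E_{i,j},\hat\Gamma_i$.

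The one real gap is the one you flag yourself: the ``arch'' claim, that every orbit of \eqref{eq:planarODE_polcoord2} entering the strip $[0,\rho^\star)\times[\theta_\ell,\theta_{\ell+1}]$ near $b_\ell$ is trapped and lands on $b_{\ell+1}$. The monotonicity heuristic ($\dot\rho$ changes sign once, $\dot\theta$ is one-signed) does not by itself bound $\rho$ uniformly, because both $\dot\rho$ and $\dot\theta$ carry $O(\rho)$ corrections $\Psi_1,\Psi_2$ whose size you do not control without further work, and the maximal height of an arch depends on where the orbit is launched. The paper closes this gap in two steps that sidestep any quantitative trapping estimate. First, it produces a \emph{single} heteroclinic orbit $G_\ell\subset\Omega_1$ from $b_\ell$ to $b_{\ell+1}$ by continuous dependence on initial conditions applied to a point close to the invariant segment $\Xi_\ell=\{0\}\times(\theta_\ell,\theta_{\ell+1})$. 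Second, it forms the closed Jordan curve $J_\ell:=\{b_\ell\}\cup\Xi_\ell\cup\{b_{\ell+1}\}\cup G_\ell$, notes that $\overline{\Int(J_\ell)}$ is compact, invariant, and contains no equilibrium, invokes \cite[\S11, Corollary 2]{andronov1973qualitativetheory} to rule out periodic orbits, and then uses the Poincar\'e-Bendixson theorem to conclude that every orbit through $\Int(J_\ell)$ is heteroclinic between $b_\ell$ and $b_{\ell+1}$. This gives Definition~\ref{def:sector}~c)(iv) without estimating the size of the arches, and it treats $m=2$ (equal eigenvalues at the nodes) with no special case: all that is used is that $\mathcal J_G(b_\ell)$ has two real nonzero eigenvalues of the same sign, which already makes $b_\ell$ a node by \cite[2.10, Theorem 4]{perko2013differential}. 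One smaller difference worth noting: where you use unspecified ``short smooth arcs'' to join the homoclinic pieces, the paper uses the Andronov ``circles without contact'' $C_\ell$ around each node (linear images of circles/ellipses, \cite[\S3, 10.--14.; \S18, Lemma 3]{andronov1973qualitativetheory}); these are everywhere transversal to the flow and give a canonical, uniform choice of the arcs $\hat\Lambda_{\ell,1},\hat\Lambda_{\ell,2}$ and of the intersection points $\hat E_{\ell,1},\hat E_{\ell,2},\hat p_\ell$, which streamlines the Jordan-curve bookkeeping you call ``technical but routine''.
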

\begin{proof}
    Assume w.l.o.g. $a=0$. Since $\Omega$ is open and $F\not\equiv 0$, we can find $r_0>0$ such that $\mathcal{B}_{r_0}(a)\subset\Omega$ and $\mathcal{B}_{r_0}(a)\cap F^{-1}(\{0\})=\{a\}$. By Proposition \ref{prop:definiteDirections2}, the equilibrium has exactly $d$ definite directions $\theta_1,\ldots,\theta_{d}\in\mathcal{E}(F,m)$. Assume that the angles in $\mathcal{E}(F,m)$ are ordered cyclic and counterclockwise with respect to $a$. For every $i\in\{1,\ldots,d\}$ we have $r_i,\delta_i>0$ such that for every $x_0\in A_i:=\{x\in\R{2}:|x-a|<r_i,|\arg(x-a)-\theta_i|<\delta_i\}\subset\Omega$ the orbit though $x_0$ tends to $a$ in the definite direction $\theta_i$. It is geometrically clear that $a$ cannot be a center or focus, cf. Definition \ref{def:GeometryOfEquilibria}. Set $\beta:=\arg(F^{(m)}(0))$ and $\lambda_i:=\cos(\beta+\theta_im-\theta_i)$, $i\in\{1,\ldots,d\}$. Proposition \ref{prop:definiteDirections2} characterizes whether the orbits near the ray with angle $\theta_i$ tend to $a$ for positive or negative time. We calculate
    \begin{align*}
		\lambda_i=\cos(\beta+\theta_i(m-1))= \cos(i\pi)=\begin{cases}
		  1&\text{if }i\text{ even}\\
		  -1&\text{if }i\text{ odd}
		\end{cases}
    \end{align*}
    and conclude ($d\ge 2$) that there is a pair of orbits, one reaching $a$ with positive time and one with negative time. Hence the equilibrium cannot be a node. In addition, the directions are alternating, i.e. every ray has no adjacent ray with the same direction.
    
    Consider the $\CC{\infty}$-system \eqref{eq:planarODE_polcoord2} on $\Omega_2=(-\rho^\star,\rho^\star)\times\R{}$ with $\rho^\star>0$ sufficiently small. Assume that $\rho^\star<\min\{r_0,r_1,\ldots,r_d\}$. In Step 4 in the proof of Proposition \ref{prop:definiteDirections2} we figured out, that the points $b_i:=(0,\theta_i)$, $i\in\{1,\ldots,d\}$, are all nodes, i.e. there exists $\varepsilon_i\in(0,\rho^\star)$ such that for all $y\in\mathcal{B}_{\varepsilon_i}(b_i)$ the orbit $\Gamma(y)$ tends to $b_i$ for positive or negative time, depending on the sign of $\lambda_i$. Set $\varepsilon:=\min\{\varepsilon_1,\ldots,\varepsilon_d\}>0$.
    
    First, fix $i\in\{1,\ldots,d\}$. The nodes $b_i$ and $b_{i+1}$ are connected by the heteroclinic orbit $\Xi_i:=\{0\}\times(\theta_i,\theta_{i+1})$.\footnote{If $i=d$, we assume $\theta_{i+1}=\theta_1$. Note that the phase portrait of \eqref{eq:planarODE_polcoord2} is $2\pi$-periodic along the direction of $\theta$.} Assume w.l.o.g that $b_i$ is repelling and $b_{i+1}$ is attracting. Choose $y\in\{0\}\times\R{}$ and $T>0$ such that $\mathcal{B}_{\frac{\varepsilon}{4}}(y)\subset\mathcal{B}_\varepsilon(b_i)$, $\Phi(T,y)\in\mathcal{B}_\varepsilon(b_{i+1})$ and $\mathcal{B}_{\frac{\varepsilon}{4}}(\Phi(T,y))\subset\mathcal{B}_\varepsilon(b_{i+1})$. By the continuous dependence on initial conditions, cf. \cite[Chapter 2.4, Theorem 4]{perko2013differential}, there exists $\delta\in(0,\frac{\varepsilon}{4})$ such that $|\Phi(t,z)-\Phi(t,y)|<\frac{\varepsilon}{4}$ for all $z\in\mathcal{B}_\delta(y)$ and $t\in[0,T]$. Choose $z\in\mathcal{B}_\delta(y)\cap\Omega_1$. By our choice of $\varepsilon$ and $\delta$, the orbit $\Gamma(z)\subset\Omega_1$ is also a heteroclinic orbit connecting $b_i$ and $b_{i+1}$. Depending on the index $i$, we denote the so constructed orbit $\Gamma(z)$ by $G_i\subset\Omega_1$.
    
    Secondly, by using the definition of $\mathcal{E}(F,m)$ in Proposition \ref{prop:definiteDirections2}, we calculate the distance $s:=|\theta_{i+1}-\theta_i|=\frac{\pi}{m-1}>0$ between the nodes on $\{0\}\times\R{}$. Set $\zeta:=\min\{\varepsilon,\frac{s}{2}\}>0$. By applying the theory of circles without contact around nodes, cf. \cite[\S3, 10.-14.]{andronov1973qualitativetheory} and \cite[\S18, Lemma 3]{andronov1973qualitativetheory}, we find continuously differentiable closed paths $C_i\subset\mathcal{B}_{\zeta}(b_i)\subset\Omega_2$ lying nowhere tangential to the flow of \eqref{eq:planarODE_polcoord2} and satisfying $\Int(C_i)\cap F^{-1}(\{0\})=\{b_i\}$. In particular, from the equations (6) and (11) in \cite[\S7, 1.]{andronov1973qualitativetheory} and the remarks made in \cite[\S7, 2.]{andronov1973qualitativetheory} it follows that the paths $C_i$ can be chosen as linear transformed circles or ellipses. By our choice of $\zeta$, these circles without contact have pairwise empty intersection. Moreover, we can ensure, cf. \cite[\S3, 10., Figure 54]{andronov1973qualitativetheory}, that every orbit tending to $b_i$ crosses $C_i$ exactly once. Thus, for all $i\in\{1,\ldots,d\}$ there exist exactly two points $\hat{E}_{i,1},\hat{E}_{i,2}\in\Omega_1$ satisfying $\{\hat{E}_{i,1}\}=G_i\cap C_i$ and $\{\hat{E}_{i,2}\}=G_i\cap C_{i+1}$. Choose $\hat{p}_i\in C_i(\hat{E}_{i-1,2},\hat{E}_{i,1})\setminus\{\hat{E}_{i-1,2},\hat{E}_{i,1}\}\not=\emptyset$ and define $\hat{\Gamma}_i:=\Gamma(\hat{p}_i)$ as well as the curve pieces $\hat{\Lambda}_{i,1}:=C_i(\hat{p}_i,\hat{E}_{i,1})\subset\Omega_1$ and $\hat{\Lambda}_{i,2}:=C_{i+1}(\hat{E}_{i,2},\hat{p}_{i+1})\subset\Omega_1$. Here we use the same notation as in Definition \ref{def:sector} a). Note that these curve sections do not have to be traversed in counterclockwise direction. Depending on $\lambda_i$, we choose the direction in such a way that $\hat{\Lambda}_{i,1},\hat{\Lambda}_{i,2}\subset\Omega_1$. By construction, the set
    \begin{align*}
        \hat{\Gamma}:=\bigcup\limits_{i=1}^d \left(\{\hat{p}_i\}\cup\hat{\Lambda}_{i,1}\cup G_i(\hat{E}_{i,1},\hat{E}_{i,2})\cup
        \hat{\Lambda}_{i,2}\cup\{\hat{p}_{i+1}\}\right)\subset\Omega_1\text{.}
    \end{align*}
    is a piecewise continuously differentiable path lying completely in $\Omega_1$. This geometric construction is visualized in Figure \ref{fig:geom_visual_elliptic_decomp}.
    
    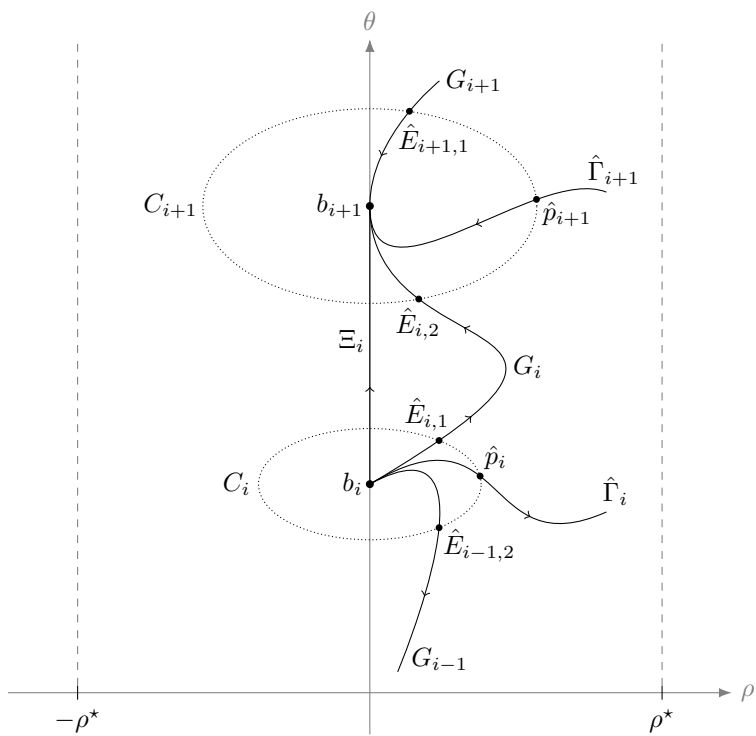
\begin{figure}[ht]
        \begin{center}
            \begin{tikzpicture}[scale=1.85]
                \draw[-{Latex}, gray] (-2.6,0) -- (2.6,0) node[right] {$\rho$};
                \draw[-{Latex}, gray] (0,-0.3) -- (0,4.7) node[above] {$\theta$};
                \draw[dashed, gray] (-2.1,0.05) -- (-2.1,4.7);
                \draw[dashed, gray] (2.1,0.05) -- (2.1,4.7);
                \draw (-2.1,0.05) -- (-2.1,-0.05) node[below] {$-\rho^\star$};
                \draw (2.1,0.05) -- (2.1,-0.05) node[below] {$\rho^\star$};
                \filldraw (0,1.5) circle (0.7pt) node[left, xshift=0.3mm] {$b_i$};
                \filldraw (0,3.5) circle (0.7pt) node[left, xshift=0.4mm] {$b_{i+1}$};
                \draw[densely dotted, black, name path=ellipse1] (0,1.5) ellipse (0.8 and 0.4) node[left, xshift=-14.6mm] {$C_i$};
                \draw[densely dotted, black, name path=ellipse2] (0,3.5) ellipse (1.2 and 0.7) node[left, xshift=-21.6mm] {$C_{i+1}$};
                \draw[black, name path=orbit1, postaction={decorate}, decoration={markings, mark= at position 0.3 with {\arrow{Classical TikZ Rightarrow}}, mark= at position 0.6 with {\arrow{Classical TikZ Rightarrow}}}] (0,1.5) .. controls (2.2,2.8) and (0,2.3) .. (0,3.5) node[right, xshift=17.8mm, yshift=-21.4mm]{$G_i$};
                \draw[black, name path=orbit2, postaction={decorate}, decoration={markings, mark= at position 0.35 with {\arrowreversed{Classical TikZ Rightarrow}}}] (0,3.5) .. controls (0,4) and (0.5,4.4) .. (0.5,4.4) node[right, xshift=-0.4mm, yshift=-0.1mm]{$G_{i+1}$};
                \draw[black, name path=orbit3, postaction={decorate}, decoration={markings, mark= at position 0.7 with {\arrow{Classical TikZ Rightarrow}}}] (0,1.5) .. controls (1,2.05) and (0.2,0.15) .. (0.2,0.15) node[right, xshift=0.6mm, yshift=1.6mm]{$G_{i-1}$};
                \draw[black, name path=orbit4, postaction={decorate}, decoration={markings, mark= at position 0.5 with {\arrowreversed{Classical TikZ Rightarrow}}}] (0,3.5) .. controls (0,2.7) and (1.2,3.8) .. (1.7,3.6) node[right, xshift=-3.7mm, yshift=2.9mm]{$\hat{\Gamma}_{i+1}$};
                \draw[black, name path=orbit5, postaction={decorate}, decoration={markings, mark= at position 0.7 with {\arrow{Classical TikZ Rightarrow}}}] (0,1.5) .. controls (1,2.1) and (0.8,0.9) .. (1.7,1.3) node[right, xshift=-1.7mm, yshift=2.8mm]{$\hat{\Gamma}_i$};
                \draw[postaction={decorate}, decoration={markings, mark= at position 0.35 with {\arrow{Classical TikZ Rightarrow}}}] (0,1.5) -- (0,3.5) node[left, xshift=0.6mm, yshift=-18mm] {$\Xi_i$};
                \fill[name intersections={of=orbit1 and ellipse1, by={intersection}}] (intersection) circle (0.7pt) node[above, xshift=-1.6mm, yshift=0.1mm]{$\hat{E}_{i,1}$};
                \fill[name intersections={of=orbit1 and ellipse2, by={intersection}}] (intersection) circle (0.7pt) node[below, xshift=-0.1mm, yshift=0.1mm]{$\hat{E}_{i,2}$};
                \fill[name intersections={of=orbit2 and ellipse2, by={intersection}}] (intersection) circle (0.7pt) node[below, xshift=3.3mm, yshift=-0.4mm]{$\hat{E}_{i+1,1}$};
                \fill[name intersections={of=orbit3 and ellipse1, by={intersection}}] (intersection) circle (0.7pt) node[right, xshift=-0.7mm, yshift=-2.3mm]{$\hat{E}_{i-1,2}$};
                \fill[name intersections={of=orbit4 and ellipse2, by={intersection}}] (intersection) circle (0.7pt) node[right, xshift=-0.4mm, yshift=-1.9mm]{$\hat{p}_{i+1}$};
                \fill[name intersections={of=orbit5 and ellipse1, by={intersection}}] (intersection) circle (0.7pt) node[above, xshift=2.1mm, yshift=-0.3mm]{$\hat{p}_i$};
             \end{tikzpicture}
        \end{center}
        \caption{Geometrical visualization of the construction in the proof of Theorem \ref{thm:elliptic_decomposition} for the case where $b_i$ is repelling and $b_{i+1}$ is attracting.}
        \label{fig:geom_visual_elliptic_decomp}
    \end{figure}

    By using the relation between the orbits of \eqref{eq:planarODE} with those of \eqref{eq:planarODE_polcoord2} on $\Omega_1$, cf. Step 1 in the proof of Proposition \ref{prop:definiteDirections2}, we get the corresponding points $p_i,E_{i,j}\in\mathcal{B}_{\rho^\star}(a)\subset\Omega$ and the corresponding paths $\Gamma,\Gamma_i,\Lambda_{i,j}\subset\mathcal{B}_{\rho^\star}(a)\subset\Omega$, $i\in\{1,\ldots,d\}$, $j\in\{1,2\}$. By construction, $\Gamma$ is a closed piecewise continuously differentiable Jordan curve differentiable everywhere except for the points $E_{i,j}$. Denote the parameterization of $\Gamma$ by $\mu:[0,1]\to\Omega$. We claim that the curve $\Gamma$ together with the characteristic orbits $\Gamma_i$ and points $p_i$ with $i\in\{1,\ldots,d\}$ form a FED of order $d$ in $a$.

    The properties in (i)-(iii) in Definition \ref{def:sector} c) can be checked directly by using the above results on $\Omega_2$. The above notation coincides with those in Definition \ref{def:sector} c). It remains to show that also property (iv) is fulfilled. Define the closed Jordan curves $J_i:=\{b_i\}\cup \Xi_i\cup\{b_{i+1}\}\cup G_i$, $i\in\{1,\ldots,d\}$. Then the property (iv) can be formulated equivalently on $\Omega_1$ as follows: For all $i\in\{1,\ldots,d\}$ and $x\in\Int(J_i)\subset\Omega_1$ the orbit $\Gamma(x)\subset\Omega_1$ is a heteroclinic orbit connecting $b_i$ and $b_{i+1}$.

    Fix $i\in\{1,\ldots,d\}$ and $x\in\Int(J_i)$. We have $\Gamma(x)\subset\overline{\Int(J_i)}$, which is compact and invariant. Moreover, by our choice of $r_0$ and $\rho^\star$, we have $\Int(J_i)\cap F^{-1}(\{0\})=\emptyset$. Hence, by \cite[\S11, Corollary 2]{andronov1973qualitativetheory}, there are no periodic orbits in $\Int(J_i)$. From the Poincaré-Bendixson Theorem, cf. \cite[3.7, Theorem 1]{perko2013differential}, it follows that the limit sets of $\Gamma(x)$ both consist of at least one equilibrium. By applying the results in \cite[\S4, 3.]{andronov1973qualitativetheory}, both limit sets are closed, connected and invariant. Thus, since the only equilibria in $\overline{\Int(J_i)}$ are the two nodes $b_i$ and $b_{i+1}$ (one repelling and the other attracting), $\Gamma(x)\subset\Int(J_i)$ must be a heteroclinic orbit connecting $b_i$ and $b_{i+1}$.

    All in all, we conclude the existence of a FED in $a$. Moreover, by the choice of the points $\hat{p}_i\in\Omega_1$, $i\in\{1,\ldots,d\}$, the characteristic orbits $\Gamma_i$ of each elliptic sector tend to $a$ in adjacent definite directions.
\end{proof}
\begin{remark}
    We have proved Theorem \ref{thm:elliptic_decomposition} constructively, i.e. we have explicitly constructed the geometrical objects required for a FED. An alternative proof is also possible by applying the Poincaré-Bendixson Index Theorem, cf. \cite[Theorem 2.2]{izydorek1996note}, \cite[Proposition 6.32]{dumortier2006qualitative} and \cite[Appendix, p. 511]{andronov1973qualitativetheory}. This alternative proof version has a few more technical difficulties to solve, since several geometrical possibilities that could a priorily occur must be excluded. A detailed proof can be found in \cite[Chapter 4.3]{masterthesiskainz}.
\end{remark}
\begin{corollary}\label{cor:index_1_center_foci_nodes}
    \it Let $\Omega\subset\C{}$ be a domain, $F\in\Hol{}(\Omega)$ and $F\not\equiv 0$. Then all nodes, centers and foci have order and index\footnote{cf. \cite[Chapter 9.6]{pruss2010gewohnliche} and \cite[Chapter V]{andronov1973qualitativetheory}.} $1$. All equilibria possessing a FED have order and index greater than $1$.
\end{corollary}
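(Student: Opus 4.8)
The plan is to reduce the corollary to a single analytic fact — that for a holomorphic vector field the Poincar\'e index of an isolated equilibrium equals its order as a zero of $F$ — and then to read both assertions off the dichotomy already established in Theorems \ref{thm:equilibria_characterization} and \ref{thm:elliptic_decomposition}.

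First I would establish the index-equals-order identity. Let $a$ be an equilibrium; since $F(a)=0$ and $F\not\equiv 0$, it has a well-defined order $m\in\N{}$, and the zeros of $F$ are isolated, so I can pick $r>0$ with $\overline{\mathcal{B}_r(a)}\subset\Omega$ and $\overline{\mathcal{B}_r(a)}\cap F^{-1}(\{0\})=\{a\}$. Under the identification $\C{}\cong\R{2}$ the right-hand side of \eqref{eq:planarODE} is the planar field $(F_1,F_2)=F$, so the index of $a$ is by definition the winding number about the origin of $t\mapsto F(\gamma(t))$, where $\gamma$ is a positively oriented parameterization of $\partial\mathcal{B}_r(a)$. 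That winding number equals $\frac{1}{2\pi\mathrm{i}}\oint_{\partial\mathcal{B}_r(a)}\frac{F'(z)}{F(z)}\,\mathrm{d}z$, and by the argument principle this is the number of zeros of $F$ inside $\mathcal{B}_r(a)$ counted with multiplicity, namely $m$. Hence the index of $a$ is the positive integer $m$ and coincides with its order.

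The two assertions then follow quickly. If $a$ is a node, center or focus, then its order is at least $1$ because $F(a)=0$, and it cannot be $\ge 2$: by the contrapositive of Theorem \ref{thm:elliptic_decomposition}, an equilibrium of order $\ge 2$ carries a FED and is in particular not a node, center or focus. So the order equals $1$, and by the previous step so does the index. Conversely, suppose $a$ possesses a FED. By Definition \ref{def:sector} d) it is not a center, node or focus; were its order equal to $1$, we would have $F'(a)\ne 0$, and then Theorem \ref{thm:equilibria_characterization}(ii)--(iv) would classify $a$ as one of those three types, a contradiction. Hence the order is $\ge 2$, and by the first step the index equals the order, so both are greater than $1$.

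The only step beyond routine bookkeeping is the first one: I would need to reconcile the precise definition of the Poincar\'e index used in \cite{pruss2010gewohnliche,andronov1973qualitativetheory} — in particular its orientation convention — with the standard orientation of $\C{}$, so that the argument principle applies as stated. Everything after that is an immediate consequence of the theorems already proved.
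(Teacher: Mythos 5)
Your proof is correct and follows essentially the same route as the paper. The paper handles the key step---that for holomorphic $F$ the Poincar\'e index of an isolated equilibrium equals its order as a zero---by citing \cite[Theorem 2.4]{broughan2003holomorphic}, whereas you supply a short self-contained argument-principle proof of that identity; otherwise both proofs obtain the first assertion from the contrapositive of Theorem \ref{thm:elliptic_decomposition} and the second from Definition \ref{def:sector} d) together with the order-$1$ classification in Theorem \ref{thm:equilibria_characterization}. Your explicit invocation of Theorem \ref{thm:equilibria_characterization}(ii)--(iv) to rule out order $1$ for a FED equilibrium is a point the paper leaves somewhat implicit, so your version is if anything a bit more complete; the orientation check you flag does go through, since the Poincar\'e index is the winding number of $(F_1,F_2)$ around a positively oriented small circle, which is exactly the winding number of $F$ counted by the argument principle.
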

\begin{proof}
    By Theorem \ref{thm:elliptic_decomposition}, nodes, centers and foci must have order $1$. By \cite[Theorem 2.4]{broughan2003holomorphic}, these equilibria have also index $1$. The second assertion follows from Definition \ref{def:sector} d) and again \cite[Theorem 2.4]{broughan2003holomorphic}.
\end{proof}
\begin{example}\label{ex:exp}
    Consider $F:\C{}\to\C{}$ with $F(x):=x^5e^x$, $F\in\Hol{}(\C{})$. The unique zero $a=0$ has order $m=5$. By Theorem \ref{thm:elliptic_decomposition}, there exists a FED of order $d=2m-2=8$. We calculate the Taylor series of $F$ by
    \begin{align*}
        F(x)=x^5\sum_{k=0}^{\infty}\frac{x^k}{k!}=\sum_{k=0}^{\infty}\frac{x^{k+5}}{k!}=\sum_{k=5}^{\infty}\frac{1}{(k-5)!}x^k
    \end{align*}
    and conclude the definite directions
    \begin{align*}
        \mathcal{E}(F,5)=\left\{\frac{\ell\pi}{4}:1\le\ell\le d\right\}\text{.}
    \end{align*}
    
    \begin{figure}[ht]
        \centering
        \includegraphics[trim={14pt 1pt 39pt 25pt},clip,scale=0.82]{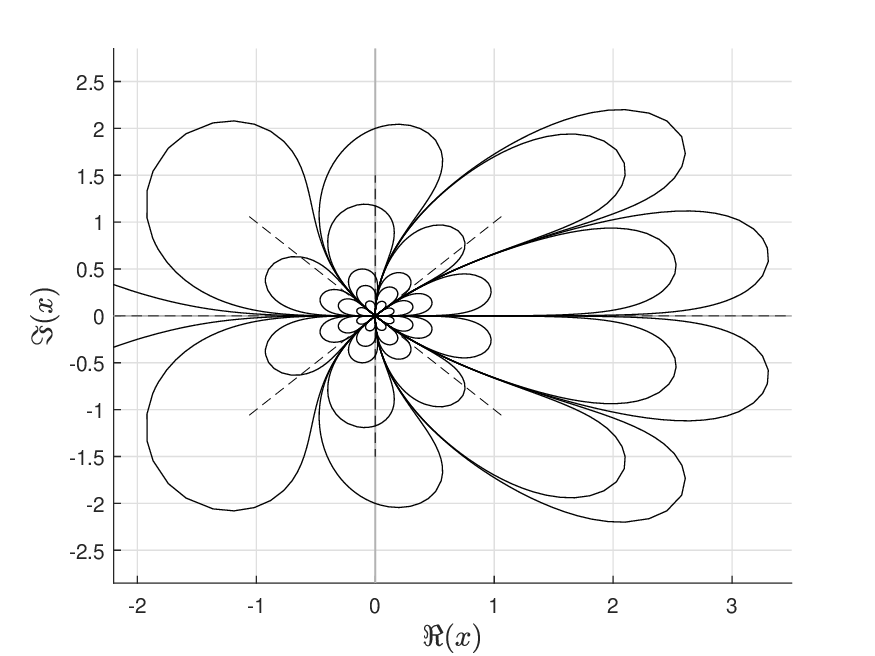}
        \caption{Local phase portrait of system \eqref{eq:planarODE} with $F(x)=x^5e^x$, plotted with Matlab.}
    \end{figure}
    
    We verify that there are indeed $d$ elliptic sectors in $a$. Moreover, we can see that the exponential function "tugs" all orbits to the right. Thus, a priori it is not clear if there is a different topological structure globally near the ray with angle $\theta=\pi$ (the negative $\Re$-axis). It can be shown that infinitely many orbits in $\C{}\setminus\{a\}$ have $a$ in its negative limit set, although they are unbounded. As a direct consequence, these orbits cannot be homoclinic. We conclude that in this example the local elliptic structure described in Definition \ref{def:sector} c) cannot be transferred to the global phase portrait.
\end{example}
\begin{example}
    Consider $F:\C{}\to\C{}$ with $F(x):=x^3(x-1)^3$, $F\in\Hol{}(\C{})$. This is a polynomial of degree $6$ having two zeros $a_1=0$ and $a_2=1$, both of order $m=3$. By Theorem \ref{thm:elliptic_decomposition}, there exists a FED of order $d=2m-2=4$ in both equilibria. We calculate
    \begin{align*}
        \mathcal{E}(F,a_1)=\mathcal{E}(F,a_2)=\left\{0,\frac{\pi}{2},\pi,\frac{3\pi}{2}\right\}\text{.}
    \end{align*}
    
    \begin{figure}[ht]
        \centering
        \includegraphics[trim={22pt 1pt 40pt 23pt},clip,scale=0.82]{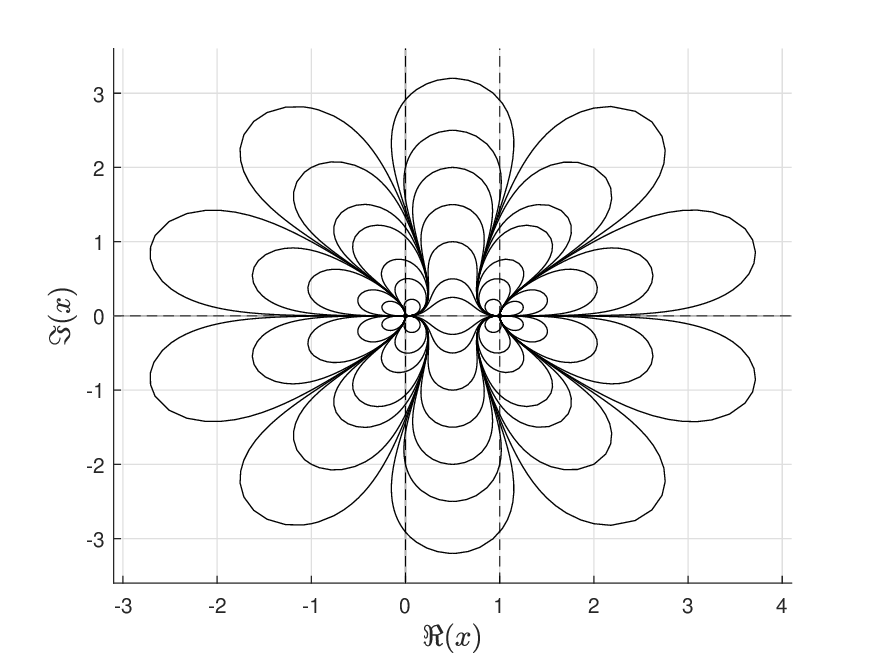}
        \caption{Local phase portrait of system \eqref{eq:planarODE} with $F(x)=x^3(x-1)^3$, plotted with Matlab.}
    \end{figure}

    In this example, the area between the two equilibria appears in some sense not intuitive. Upon closer inspection, it becomes apparent that the orbits "between" the two equilibria are heteroclinic. Thus, we recognise a global behaviour of the orbits, different from that described in Definition \ref{def:sector} c). But this time, the orbits are exclusively bounded. We conclude that also in this example the local structure cannot be transferred to the global phase portrait.
    
    The question now also arises as to whether this different global structure with unbounded orbits (as in Example \ref{ex:exp}) is possible at all in the case of polynomial holomorphic flows. Moreover, how many unbounded orbits can exist in an polynomial holomorphic dynamical system? These questions will be discussed in an upcoming paper.
\end{example}

\section{\bf A Poincaré-Bendixson-type theorem for holomorphic flows}
Using our results in the last section, we formulate and prove a specific Poincaré-Bendixson-type result.
\begin{theorem}[Poincaré-Bendixson for Holomorphic Flows]\label{thm:poincare_bendixson}
    Let $\Omega\subset\C{}$ be a simply connected domain, $F\in\Hol{}(\Omega)$, $F\not\equiv 0$, $K\subset\Omega$ compact and $\xi\in K$ such that $\Gamma_{+(-)}(\xi)\subset K$. Then either $\Gamma(\xi)$ is a periodic orbit with exactly one equilibrium, a center, in its interior, or the positive (negative) limit set of $\Gamma(\xi)$ consists of exactly one equilibrium. Additionally, if $\Gamma(\xi)$ is periodic, the interior of $\Gamma(\xi)$ (except of the center) is filled entirely with periodic orbits all having the center in its interior.
\end{theorem}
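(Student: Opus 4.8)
The plan is to run the classical Poincar\'e--Bendixson argument and then refine its three possible outcomes using the local classification of Sections~3--4 together with one complex-analytic fact about periods. I treat the case $\Gamma_{+}(\xi)\subset K$; the case $\Gamma_{-}(\xi)\subset K$ follows verbatim after replacing $F$ by $-F\in\Hol(\Omega)$, which preserves periodic orbits and centers. Since $K$ is compact and $\Gamma_{+}(\xi)\subset K\subset\Omega$, the solution through $\xi$ exists for all $t\ge 0$ and $w:=w_{+}(\Gamma(\xi))$ is nonempty, compact, connected and invariant; as $F\not\equiv 0$ the equilibria are isolated, hence finitely many in $K$. By the sharp form of the Poincar\'e--Bendixson theorem, cf.\ \cite[3.7, Theorem 1]{perko2013differential} and \cite[\S4]{andronov1973qualitativetheory}, $w$ is one of: (a) a single periodic orbit; (b) a single equilibrium; (c) a graphic, i.e.\ finitely many equilibria together with non-closed orbits whose positive and negative limit sets are among those equilibria.

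The heart of the matter is the auxiliary claim: \emph{if $\Gamma(\xi)$ is not periodic, then $w$ is a single equilibrium.} To exclude (a) here, note that a periodic orbit $\gamma$ contains no equilibrium, so $1/F$ is holomorphic near $\gamma$, and parametrising $\gamma$ by the flow gives $\oint_{\gamma}\frac{\mathrm{d}z}{F(z)}=T_{\gamma}\in\mathbb{R}_{>0}$, the minimal period. Therefore $\Im\!\int\frac{\mathrm{d}z}{F(z)}$ is a well-defined single-valued harmonic function with non-vanishing gradient on an annular neighbourhood of $\gamma$, constant along orbits; comparing its values on a transversal arc shows the orbits near $\gamma$ are all closed, so $\gamma$ is not isolated among periodic orbits and cannot be the limit set of a distinct orbit --- a contradiction. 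To exclude (c), I would show that any equilibrium $a\in w$ forces $w=\{a\}$, contradicting $\lvert w\rvert>1$. By Theorems~\ref{thm:equilibria_characterization} and~\ref{thm:elliptic_decomposition}, $a$ is a center, node, focus, or carries a FED. A center is impossible, since $\Gamma(\xi)$ would meet, hence coincide with, a nearby closed orbit. An attracting node or focus (reverse time if necessary) is asymptotically stable by Theorem~\ref{thm:equilibria_characterization}, so $\Gamma(\xi)$ eventually enters its basin and $w=\{a\}$; a repelling node or focus cannot lie in any positive limit set, because the Cauchy--Riemann form of the Jacobian makes $\tfrac{\mathrm{d}}{\mathrm{d}t}\lvert\Phi(t,\cdot)-a\rvert^{2}$ strictly positive near $a$, so $\liminf_{t\to\infty}\lvert\Phi(t,\xi)-a\rvert>0$. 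If $a$ carries a FED, the elliptic decomposition of Definition~\ref{def:sector} and Theorem~\ref{thm:elliptic_decomposition} shows that, inside the decomposition's boundary curve $\Gamma_{0}$, every orbit arc lies on an orbit that converges to $a$ forward in time, backward in time, or both, the convergent branch remaining inside $\Int(\Gamma_{0})$; since $a\in w$, the orbit $\Gamma(\xi)$ lies in $\Int(\Gamma_{0})$ at arbitrarily large times, and a short case distinction on whether it subsequently leaves shows that at a suitable such time it sits on an orbit converging to $a$ in forward time, whence $w=\{a\}$. This establishes the auxiliary claim and the second alternative of the theorem.

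If instead $\Gamma(\xi)$ is periodic, put $\gamma:=\Gamma(\xi)$, a Jordan curve with $\overline{\Int(\gamma)}\subset\Omega$ by simple connectedness. The winding number of $F$ along $\gamma$ is $1$, so the indices of the equilibria enclosed sum to $1$; by Corollary~\ref{cor:index_1_center_foci_nodes} each index is $\ge 1$, hence $\Int(\gamma)$ contains exactly one equilibrium $a$, of index $1$, so $a$ is a center, node or focus. For every $p\in\Int(\gamma)\setminus\{a\}$ one has $\Gamma_{\pm}(p)\subset\overline{\Int(\gamma)}$, so the auxiliary claim applied in both time directions shows $\Gamma(p)$ is periodic or homoclinic to $a$. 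A node or focus admits no homoclinic orbit (by the same monotonicity of $\lvert\cdot-a\rvert$ near $a$), so otherwise every such $p$ would lie on a closed orbit and $a$ would be a center --- a contradiction; hence $a$ is a center, and then, a center having no homoclinic orbit either, every $p\in\Int(\gamma)\setminus\{a\}$ lies on a closed orbit, which by the index count encircles $a$. This gives the first alternative and the "Additionally" clause.

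The step I expect to be the main obstacle is the exclusion of limit cycles --- showing that a periodic orbit occurring as a limit set is never approached by a distinct orbit --- since this is exactly where holomorphy must be exploited beyond the absence of saddles, and the period identity $\oint_{\gamma}\mathrm{d}z/F=T_{\gamma}$ with the ensuing local first integral $\Im\!\int\mathrm{d}z/F$ appears to be the cleanest tool. A secondary technical burden is the FED case of the auxiliary claim, where one must read off the precise inward/outward orientation of the flow along the decomposition's boundary curve from the definitions of Section~4 and track the excursions of $\Gamma(\xi)$ into and out of that neighbourhood.
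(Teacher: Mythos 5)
Your overall strategy---run Poincar\'e--Bendixson, then use the local classification of Section~3 and the FED structure of Section~4 to kill the periodic-limit-set and graphic alternatives, and use index theory for the periodic case---is the same as the paper's. The one genuinely different ingredient is your exclusion of a periodic orbit as a limit set. The paper simply cites Broughan's theorem that holomorphic flows on simply connected regions have no limit cycles, whereas you give a self-contained local argument: $\oint_\gamma \mathrm{d}z/F = T_\gamma\in\mathbb{R}_{>0}$, so $\Im\int \mathrm{d}z/F$ is a single-valued harmonic first integral with nonvanishing gradient on an annular neighbourhood of $\gamma$, forcing all nearby orbits to be closed (or, even more directly, forcing any orbit that accumulates on $\gamma$ to already have $\Im\int\mathrm{d}z/F = 0$, hence to coincide with $\gamma$). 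This is cleaner in one respect: it is purely local around $\gamma$ and does not invoke simple connectedness at that step (simple connectedness is still needed, as you note, to put $\overline{\Int(\gamma)}$ inside $\Omega$ for the index count). You also treat repelling nodes/foci more carefully than the paper: the paper's argument for ``$a_1$ cannot be a node or focus'' only literally covers the stable case, whereas your monotonicity of $|\Phi(t,\cdot)-a|^2$ (which follows from $\langle z, F'(a)z\rangle = \alpha|z|^2$) explicitly rules out a repelling node or focus from any positive limit set.

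One caution on completeness, which applies to both your write-up and the paper's: the FED case of your auxiliary claim is stated rather loosely (``a short case distinction on whether it subsequently leaves''). The crisp version is that the boundary curve of the FED admits transversal crossings only through the arcs $\Lambda_1$ (inward) and $\Lambda_2$ (outward) of the sectors, the remaining boundary pieces being orbit segments; since $a\in w_+(\Gamma(\xi))$ and $a$ is interior to the FED region, $\Gamma(\xi)$ must enter that region at large times, hence through some $\Lambda_1$-arc, whose forward half-orbit by Definition~\ref{def:sector}~c)(iii) stays inside and converges to $a$, giving $w_+=\{a\}$; if instead $\Gamma(\xi)$ sits on a characteristic orbit or inside some $\tilde S$, convergence is immediate. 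Spelling this out would close the gap you flagged. The rest of the proposal is sound and matches the paper's line of reasoning.
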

\begin{proof}
    Assume w.l.o.g. $\Gamma_+(\xi)\subset K$ and set $\Gamma:=\Gamma_+(\xi)$. By \cite[Theorem 3.2]{broughan2003holomorphic} and \cite[Theorem 4.34]{masterthesiskainz}, respectively, there are no limit cycles in $\Omega$. Hence we can apply the Poincaré-Bendixson Theorem, cf. \cite[3.7, Theorem 1]{perko2013differential}, to conclude that $\Gamma$ is either a periodic solution, or $w_+(\Gamma)$ consists of at least one equilibrium.
    
	First, assume that $\Gamma$ is periodic, i.e. $\Gamma=\Gamma(\xi)$. Since $\Omega$ is simply connected, we have $\overline{\Int(\Gamma)}\subset\Omega$. By applying the results in \cite[\S11]{andronov1973qualitativetheory} as well as Corollary \ref{cor:index_1_center_foci_nodes}, there exists exactly one equilibrium $a\in\Int(\Gamma)$ and this equilibrium is a center, node or focus. Suppose that $a$ is not a center, i.e. $a$ is asymptotically stable in exactly one time direction, say w.l.o.g. $t\to\infty$. Thus, there exists $x\in\Int(\Gamma)\setminus\{a\}$ with $a\in w_+(\Gamma(x))$ and $a\not\in w_-(\Gamma(x))$, i.e. the orbit $\Gamma(x)$ cannot be periodic. Since $a$ is the only equilibrium in $\Int(\Gamma)$, the Generalized Poincaré-Bendixson Theorem, cf. \cite[3.7, Theorem 2]{perko2013differential}, implies the existence of a limit cycle in the limit set $w_-(\Gamma(x))$. This is a contradiction to \cite[Theorem 3.2]{broughan2003holomorphic}. Hence $a$ must be a center. In addition, again by the Generalized Poincare-Bendixson Theorem, all orbits in $\Int(\Gamma)\setminus\{a\}$ must be periodic. If one of these periodic orbits in $\Int(\Gamma)$ had not $a$ in its interior, this would be a contradiction to \cite[\S11, Corollary 1]{andronov1973qualitativetheory}, since $a$ is the only equilibrium in $\Int(\Gamma)$. Hence the set $\Int(\Gamma)\setminus\{a\}$ has to be filled entirely with periodic orbits all having $a$ in its interior.
 
	Secondly, assume that $\Gamma$ is not periodic and $w_+(\Gamma)$ consists of more than exactly one equilibrium. Then, by \cite[3.7, Theorem 2]{perko2013differential}, the following case occurs: The set $w_+(\Gamma)$ consists of a finite number of equilibria together with heteroclinic and homoclinic orbits each having one of these equilibria in their limit sets. We prove that this cannot occur.
 
	Suppose there is at least one homoclinic limit orbit $S\subset w_+(\Gamma)$ in the equilibrium $a\in w_+(\Gamma)$, i.e. we have $w_+(S)=w_-(S)=\{a\}$. Since a center is geometrically impossible and nodes and foci are asymptotically stable in exactly one time direction, $a$ must have a FED, cf. Theorem \ref{thm:elliptic_decomposition}. Moreover, for $x\in\Gamma$ and $y\in S$ sufficiently close to $a$ there exist sequences $(t_k)_{k\in\N{}},(\tilde{t}_k)_{k\in\N{}}\subset\R{}$ with $t_k,\tilde{t}_k\to\infty$, $\Phi(t_k,x)\to a$ and $\Phi(\tilde{t}_k,x)\to y$ for $k\to\infty$. Hence $\Gamma$ comes arbitrarily close to $a$, but does not tend to $a$. This is impossible for such an equilibrium, cf. Definition \ref{def:sector} c) and Proposition \ref{prop:definiteDirections2} (iii).
 
    Suppose there are at least two equilibria $a_1,a_2\in w_+(\Gamma)$, $a_1\not=a_2$, with at least one heteroclinic limit orbit $S\subset w_+(\Gamma)$ satisfying $w_+(S)=\{a_1\}$ and $w_-(S)=\{a_2\}$. Suppose that $a_1$ is a node or focus. Let $\delta>0$ be the radius such that $w_+(\Gamma(y))=\{a_1\}$ for all $y\in\mathcal{B}_\delta(a_1)$, cf. Definition \ref{def:GeometryOfEquilibria}. Since $a_1\in w_+(\Gamma)$, we can choose $y\in\Gamma\cap\mathcal{B}_\delta(a_1)$. We get $a_2\in w_+(\Gamma)=w_+(\Gamma(y))=\{a_1\}$, which is a contradiction. Hence $a_1$ cannot be a node or focus. Since a center is geometrically impossible, $a_1$ must have again a FED, cf. Theorem \ref{thm:elliptic_decomposition}. As in the above case, we can choose suitable time sequences such that $\Gamma$ comes arbitrarily close to $a_1$, but does not tend to $a_1$. This is impossible for $a_1$.
\end{proof}
\begin{corollary}
    \it Let $F\in\Hol{}(\C{})$ be entire and $F\not\equiv 0$. All bounded non-periodic orbits of \eqref{eq:planarODE} are either homoclinic or heteroclinic.
\end{corollary}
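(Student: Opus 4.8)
The plan is to reduce directly to Theorem~\ref{thm:poincare_bendixson} applied in both time directions. Let $\Gamma=\Gamma(\xi)$ be a bounded non-periodic orbit of \eqref{eq:planarODE}. First I would observe that, since $\Gamma$ is bounded, its closure $K:=\overline{\Gamma(\xi)}\subset\C{}$ is compact; by the standard escape-lemma argument for maximal solutions (a maximal trajectory that remains in a compact subset of the domain of definition exists for all time), the maximal interval of existence satisfies $I(\xi)=\R{}$, so both $\Gamma_+(\xi)$ and $\Gamma_-(\xi)$ are defined and contained in $K$. Since $F$ is entire, $\Omega=\C{}$ is a simply connected domain and $F\in\Hol{}(\C{})$, hence the hypotheses of Theorem~\ref{thm:poincare_bendixson} are fulfilled with this $K$ and this $\xi$, in both time directions.

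Next I would apply Theorem~\ref{thm:poincare_bendixson} twice. Applied to $\Gamma_+(\xi)\subset K$: since $\Gamma(\xi)$ is not periodic, the periodic alternative of the theorem is excluded, so $w_+(\Gamma(\xi))$ consists of exactly one equilibrium, say $w_+(\Gamma(\xi))=\{a_+\}$. Applied to $\Gamma_-(\xi)\subset K$: again the periodic alternative is excluded, so $w_-(\Gamma(\xi))=\{a_-\}$ for some equilibrium $a_-$.

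Finally, if $a_+=a_-$ then $w_+(\Gamma(\xi))=w_-(\Gamma(\xi))=\{a_+\}$, i.e. $\Gamma(\xi)$ is homoclinic; if $a_+\neq a_-$, then $\Gamma(\xi)$ connects the two distinct equilibria $a_-$ and $a_+$ and is therefore heteroclinic. In either case the assertion holds. The only point requiring any care is the first one, namely that boundedness of the orbit forces global existence, so that the compact set $K=\overline{\Gamma(\xi)}$ may legitimately be inserted into Theorem~\ref{thm:poincare_bendixson}; this is a routine fact about maximal solutions, and everything else is an immediate consequence of the theorem.
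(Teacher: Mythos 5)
Your proof is correct and is essentially the paper's own argument (the paper simply states that the corollary "follows directly from Theorem~\ref{thm:poincare_bendixson}"); you have merely spelled out the routine steps — boundedness forces global existence, apply the theorem in both time directions, rule out the periodic alternative, and read off the homoclinic/heteroclinic dichotomy from the two singleton limit sets.
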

\begin{proof}
    This follows directly from Theorem \ref{thm:poincare_bendixson}.
\end{proof}

\bigskip

\bibliographystyle{plain}

\end{document}